\newtheorem{theorem}{Theorem}[section]
\newtheorem{proposition}[theorem]{Proposition}
\newtheorem{lemma}[theorem]{Lemma}
\theoremstyle{rem}
\theoremstyle{definition}
\newtheorem{definition}[theorem]{Definition}
\theoremstyle{construct}
\theoremstyle{examp}
\newcommand\projective\mathbf
\newcommand\PP{\projective P}
\newcommand\OO{\mathcal O}
\newcommand\ZZ{\mathbb Z}
\newcommand\GG{\mathbb Gr}
\newcommand\onto\twoheadrightarrow
\newcommand\lra\longrightarrow
\newcommand\dar\downarrow
\DeclareMathOperator{\pic}{Pic}
\DeclareMathOperator{\im}{im}
\DeclareMathOperator{\cok}{coker}
\DeclareMathOperator{\rk}{rank}
\DeclareMathOperator{\Hom}{Hom}
\begin{document}

\title{Monads on multiprojective spaces and associated vector bundles}
\author{Damian M Maingi}
\date{April, 2022}
\keywords{Monads, multiprojective spaces, simple vector bundles}

\address{Department of Mathematics\\Sultan Qaboos University\\ P.O Box 50, 123 Muscat, Oman\\
Department of Mathematics\\University of Nairobi\\P.O Box 30197, 00100 Nairobi, Kenya\\https://orcid.org/0000-0001-9267-9388}
\email{dmaingi@squ.edu.om, dmaingi@uonbi.ac.ke} 

\maketitle

\begin{abstract}
In this paper we establish the existence of monads on Cartesian products of projective spaces. 
We construct vector bundles associated to monads  on 
$\PP^{a_1}\times\PP^{a_1}\times\PP^{a_2}\times\PP^{a_2}\times\cdots\times\PP^{a_n}\times\PP^{a_n}$.
Once the  monad on $X$ exists the next natural question is if the cohomology vector bundle associated to these monads are simple or not. 
We study these vector bundles associated to monads on $X$ and prove their stability and simplicity.
\end{abstract}

\section{Introduction}
\noindent The goal of this paper is the construction of indecomposable vector bundles on algebraic varieties motivated by
the famous Hartshorne's conjecture concerning the non-existence of indecomposable rank 2 vector bundles on 
$n$-dimensional projective spaces for $n\geq7$\cite{3}. Hartshorne\cite{4} gave a list of problems, the first two of which 
concern finding vector bundles of low rank for large dimension projective spaces. 

In the last forty years there are few examples of indecomposable vector bundles of low rank $r$, say for $r\leq n-2$ on $\PP^n$, $n\geq4$.
Some examples of low rank indecomposable vector bundles over projective varieties are;
in positive characteristic $p\neq2$ Kumar \cite{11} constructed many indecomposable rank 2 bundles on $\PP^4$. 
Kumar, Peterson and Rao A P  constructed of low rank vector bundles on $\PP^4$ and $\PP^5$. 
In characteristic zero, there is the famous Horrocks-Mumford bundle of rank 2 over $\PP^4$\cite{8}, there is also the Horrocks vector bundle of rank 3 on $\PP^5$\cite{8}. 
Tango constructed the Tango bundles\cite{18} of rank $n-1$ on $\PP^n$ for $n\geq3$ and the rank 2 vector bundle on $P^5$ in characteristic 2 
by Tango\cite{19} and these were obtained as cohomologies of certain monads.\\

\noindent One of the most important tools or technique to construct these vector bundles is via monads which 
appear in many contexts within algebraic geometry. Monads were first introduced by Horrocks\cite{7} who showed  
that all vector bundles $E$ on $\PP^3$ could be obtained as the cohomology bundle of a monad of the following kind:
\[
\xymatrix
{
0\ar[r] & \oplus_i\OO_{\PP^3}(a_i) \ar[r]^{A} & \oplus_j\OO_{\PP^3}(b_j) \ar[r]^{B} & \oplus_n\OO_{\PP^3}(c_n) \ar[r] & 0
}
\]
where $A$ and $B$ are matrices whose entries are homogeneous polynomials of degrees $b_j-a_i$ and $c_n-b_j$ respectively for some integers $i,j,n$.\\

\noindent The first problem is to establish the existence of monads on the given algebraic variety. 
Fl\o{}ystad\cite{2} gave sufficient and necessary conditions for the existence of monads over the projective space. 
Costa and Miro-Roig \cite{1} extended these results to smooth quadric hypersurfaces of dimension at least 3.
Marchesi, Marques and Soares\cite{13} generalized Fl\o{}ystad's theorem to a larger set of varieties. Maingi\cite{13} established the
existence of monads on $\PP^n\times\PP^n$ and proved simplicity of the cohomology bundle. 
Most recently, Maingi\cite{15} extended the results to  $\PP^n\times\PP^n\times\PP^m\times\PP^m$ .\\
\\
In this paper we establish the existence of monads on multiprojective spaces $\PP^{a_1}\times\PP^{a_1}\times\PP^{a_2}\times\PP^{a_2}\times\cdots\times\PP^{a_n}\times\PP^{a_n}$. We first extend Fl\o{}ystad's\cite{2} main theorem to 
\\
We first establish the existence of monads
\[\begin{CD}0@>>>{\OO_X(-1,\cdots,-1)^{\oplus k}} @>>^{f}>{\mathscr{G}_n\oplus\cdots\oplus\mathscr{G}_m}@>>^{g}>\OO_X(1,\cdots,1)^{\oplus k} @>>>0\end{CD}\]
on $X=\PP^{a_1}\times\PP^{a_1}\times\PP^{a_2}\times\PP^{a_2}\times\cdots\times\PP^{a_n}\times\PP^{a_n}$
where 
\begin{align*}
\mathscr{G}_1:=\OO_X(-1,0,0,\cdots,0)^{\oplus a_1+\oplus k}\oplus\OO_X(0,-1,0,0,\cdots,0)^{\oplus a_1+\oplus k}\\
\mathscr{G}_2:=\OO_X(0,0,-1,\cdots,0)^{\oplus a_2+\oplus k}\oplus\OO_X(0,0,0,-1,\cdots,0)^{\oplus a_2+\oplus k}\\
\cdots\cdots\cdots\cdots\cdots\cdots\cdots\cdots\cdots\cdots\cdots\cdots\cdots\cdots\cdots\cdots\cdots\cdots\cdots\\
\mathscr{G}_n:=\OO_X(0,0,\cdots,0,-1,0)^{\oplus a_n+\oplus k}\oplus\OO_X(0,0,\cdots,0,-1)^{\oplus a_n+\oplus k}
\end{align*}
We then prove stability of the kernel bundle $\ker g$ and finally prove that the cohomology vector bundle, $E=\ker g/\im f$ is simple.\\
\\
\noindent In the following section we outline and set up the key concepts that we shall use to prove the main theorems. 
These definitions and notations can be found in Chapter 2 of \cite{15} by Okonek, Schneider and Spindler.

\section{Preliminaries}

\begin{definition}
Let $X$ be a nonsingular projective variety. 
\begin{enumerate}
\renewcommand{\theenumi}{\alph{enumi}}
 \item A {\it{monad}} on $X$ is a complex of vector bundles:
\[
\xymatrix{0\ar[r] & M_1 \ar[r]^{\alpha} & M_0 \ar[r]^{\beta} & M_2 \ar[r] & 0}
\]
with $\alpha$ injective and $\beta$ surjective equivalently, $M_\bullet$ is a monad if $\alpha$ and $\beta$ are of maximal rank and $\beta\circ\alpha = 0$.
\item A monad as defined above has a display diagram of short exact sequences as shown below:
\[
\begin{CD}
@.@.0@.0\\
@.@.@VVV@VVV\\
0@>>>{M_1} @>>>\ker{\beta}@>>>E@>>>0\\
@.||@.@VVV@VVV\\
0@>>>{M_1} @>>^{\alpha}>{M_0}@>>>\cok{\alpha}@>>>0\\
@.@.@V^{\beta}VV@VVV\\
@.@.{M_2}@={M_2}\\
@.@.@VVV@VVV\\
@.@.0@.0
\end{CD}
\]
\item The kernel of the map $\beta$, $\ker\beta$ and the cokernel of $\alpha$, $\cok\alpha$ for the given monad are also vector bundles and the vector bundle
$E = \ker(\beta)/\im (\alpha)$ and is called the cohomology bundle of the monad.
\item The rank of $E$ is given by, $\rk(E)= \rk(M_0)-\rk(M_1)-\rk(M_2)$.
\item The $i^{th}$ chern class of $E$ is given by, $c_i(E)= c_i(M_0)c_i(M_1)^{-1}c_i(M_2)^{-1}$.
\end{enumerate}
\end{definition}

\begin{definition}
Let $X$ be a nonsingular projective variety, let $\mathscr{L}$ be a very ample line sheaf, and $V,W,U$ be finite dimensional $k$-vector spaces.
A linear monad on $X$ is a complex of sheaves,
\[ M_\bullet:
\xymatrix
{
0\ar[r] & V\otimes {\mathscr{L}}^{-1} \ar[r]^{A} & W\otimes \OO_X \ar[r]^{B} & U\otimes \mathscr{L} \ar[r] & 0
}
\]
where $A\in \Hom(V,W)\otimes H^0 \mathscr{L}$ is injective and $B\in \Hom(W,U)\otimes H^0 \mathscr{L}$ is surjective.\\
The existence of the monad $M_\bullet$ is equivalent to the following conditions on $A$ and $B$
\begin{enumerate}
\renewcommand{\theenumi}{\alph{enumi}}
 \item $A$ and $B$ are of maximal rank.
 \item $BA$ is the zero matrix.
\end{enumerate}

\end{definition}

\begin{definition}
A torsion-free sheaf $E$ on $X$ is said to be a {\it{linear sheaf}}  on $X$ if it can be represented as the
cohomology sheaf of a linear monad i.e.
$E= \ker(\beta)/\im (\alpha)$, moreover $\rk(E) = w - u - v$, where $w=\dim W$, $v=\dim V$ and $u=\dim U$.
\end{definition}

\begin{definition}
Let $X$ be a non-singular irreducible projective variety of dimension $d$ and let $\mathscr{L}$ be an ample line bundle on $X$. For a 
torsion-free sheaf $F$ on $X$ we define
\begin{enumerate}
\renewcommand{\theenumi}{\alph{enumi}}
 \item the degree of $F$ relative to $\mathscr{L}$ as $\deg_{\mathscr{L}}F:= c_1(F)\cdot \mathscr{L}^{d-1}$, where $c_1(F)$ is the first chern class of $F$
 \item the slope of $F$ as $\mu_{\mathscr{L}}(F):= \frac{c_1(F)\mathscr{L}^{d-1}}{rk(F)}$.
 \end{enumerate}
\end{definition}

\begin{definition}
Let $X$ be an algebraic variety and let $E$ be a torsion-free sheaf on $X$. Then $E$ is $\mathscr{L}$-stable
if every subsheaf $F\hookrightarrow E$ satisfies $\mu_{\mathscr{L}}(F)<\mu_{\mathscr{L}}(E)$, where $\mathscr{L}$ is an ample invertible sheaf.
\end{definition}

\subsection{Hoppe's Criterion over cyclic varieties.}
Suppose that the picard group Pic$(X) \simeq \ZZ$ such varieties are called cyclic.
Given a locally free sheaf (or, equivalently, a holomorphic vector bundle) $E\rightarrow X$, 
there is a unique integer $k_E$ such that $-r + 1 \leq c_1(E(-k_E)) \leq 0$.
Setting $E_{norm} := E(-k_E)$, we say $E$ is normalized if $E = E_{norm}$. 
Then one has the following stability criterion:

\begin{proposition} [\cite{6}, Lemma 2.6]
Let $E$ be a rank $r$ holomorphic vector bundle over a cyclic projective variety $X$. 
\begin{enumerate}
\renewcommand{\theenumi}{\alph{enumi}}
 \item If $H^0((\bigwedge^q E)_{norm}) = 0$  for  $1\leq q\leq r-1$, then E is stable.
 \item If $H^0((\bigwedge^q E)_{norm}(-1)) = 0$ for $1\leq q \leq r-1$, then E is semistable.
\end{enumerate}
\end{proposition}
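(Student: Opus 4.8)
The plan is to prove the contrapositive of each assertion: if $E$ is not stable then $H^0((\bigwedge^q E)_{norm})\neq 0$ for some $q$ with $1\le q\le r-1$, and if $E$ is not semistable then $H^0((\bigwedge^q E)_{norm}(-1))\neq 0$ for some such $q$. Since $\operatorname{Pic}(X)\simeq\ZZ$, I fix the ample generator $\OO_X(1)$ and write $c_1(F)=(\deg F)H$, where $H=c_1(\OO_X(1))$, for every torsion-free sheaf $F$; then $\mu_{\mathscr L}(F)$ is a fixed positive multiple of $(\deg F)/\rk F$, so every slope inequality reduces to a comparison of the two integers $\deg F$ and $\rk F$.

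\emph{Reduction to saturated subsheaves, and extraction of a section.} If $F\subseteq E$ is any subsheaf, its saturation $\overline F$ (the preimage in $E$ of the torsion subsheaf of $E/F$) has $\rk\overline F=\rk F$ and $\deg\overline F\ge\deg F$; hence it suffices to rule out \emph{saturated} subsheaves $F$, of some rank $q$ with $1\le q\le r-1$, for which $\mu(F)\ge\mu(E)$ (when proving stability) or $\mu(F)>\mu(E)$ (when proving semistability). Fix such an $F$. Since $E/F$ is torsion-free, it is locally free on an open set $U\subseteq X$ with $\operatorname{codim}(X\setminus U)\ge 2$, so over $U$ the sheaf $F$ is a subbundle of $E$ and $\bigwedge^q F|_U=(\det F)|_U$ sits as a line subbundle of $(\bigwedge^q E)|_U$. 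As $X$ is smooth, $\det F:=(\bigwedge^q F)^{\vee\vee}$ is an honest line bundle, say $\det F\cong\OO_X(c)$ with $c=\deg F$; the inclusion above is then a nonzero section of the locally free sheaf $(\bigwedge^q E)(-c)$ over $U$, and since $X$ is normal and $X\setminus U$ has codimension $\ge 2$ it extends to all of $X$. Hence $H^0((\bigwedge^q E)(-c))\neq 0$ with $c=\deg F$.

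\emph{Numerics, and conclusion.} From $\rk(\bigwedge^q E)=\binom rq$ and $c_1(\bigwedge^q E)=\binom{r-1}{q-1}c_1(E)$ one obtains $\mu(\bigwedge^q E)=q\,\mu(E)$, and therefore $(\bigwedge^q E)_{norm}=(\bigwedge^q E)(-k)$ with $k=\lceil q\,\mu(E)\rceil$. If $E$ is not stable, choose $F$ as above with $\mu(F)\ge\mu(E)$, i.e.\ $c\ge q\,\mu(E)$; as $c\in\ZZ$ this forces $c\ge k$, so $(\bigwedge^q E)(-c)=(\bigwedge^q E)_{norm}(-(c-k))$ with $c-k\ge 0$, and multiplying the section found above by a nonzero section of $\OO_X(c-k)$ (which exists, the ample generator being effective) yields $H^0((\bigwedge^q E)_{norm})\neq 0$, contradicting the hypothesis of (a). If $E$ is not semistable, the same argument run with the strict inequality $\mu(F)>\mu(E)$ places the section one twist further down, in $(\bigwedge^q E)_{norm}(-1)$, contradicting the hypothesis of (b).

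The step I expect to be the main obstacle is the extraction of the section in the second paragraph: turning the rank-one image of $\bigwedge^q F$ in $\bigwedge^q E$ into a genuine global section of a line-bundle twist of $\bigwedge^q E$. This is exactly where one uses that $\operatorname{Pic}(X)\simeq\ZZ$ (so that reflexive rank-one sheaves are line bundles and $c_1$ is recorded by a single integer), that $X$ is smooth and normal (for the extension of the section across a set of codimension $\ge 2$), and that saturating $F$ makes it a subbundle outside codimension $2$. The remaining ingredients — invariance of the slope comparison under saturation, the binomial identities for $\rk$ and $c_1$ of $\bigwedge^q E$, and the integrality step comparing $\deg F$ with $k$ — are routine.
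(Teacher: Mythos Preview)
The paper does not prove this proposition: it is quoted from the literature and used without argument, so there is no proof in the paper to compare yours against. Your approach is the standard one for Hoppe's criterion, and for part~(a) it is essentially correct --- the only quibble being your assumption that $\OO_X(c-k)$ has a nonzero global section whenever $c-k\ge 0$ because ``the ample generator is effective''; this is not automatic for an arbitrary ample generator of $\pic(X)\simeq\ZZ$, though it does hold in every case the paper actually uses.

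Part~(b), however, has a genuine gap. From the strict inequality $c>q\mu(E)$ with $c\in\ZZ$ you infer $c\ge k+1$ where $k=\lceil q\mu(E)\rceil$; but this only follows when $q\mu(E)\in\ZZ$. When $q\mu(E)\notin\ZZ$, the inequality $c>q\mu(E)$ gives only $c\ge\lceil q\mu(E)\rceil=k$, so your section lands in $(\bigwedge^q E)_{norm}$ rather than one twist lower. In fact this is not a repairable defect of your argument but of the statement as recorded here: with the paper's normalisation convention $-r+1\le c_1(E_{norm})\le 0$ (equivalently $k_E=\lceil\mu(E)\rceil$), assertion~(b) is false. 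On $\PP^1$ take $E=\OO(1)\oplus\OO(2)$; then $k_E=2$, so $E_{norm}(-1)=\OO(-2)\oplus\OO(-1)$ has $H^0=0$, yet $E$ is not semistable. The usual form of the semistability criterion replaces $\lceil q\mu(E)\rceil$ by $\lfloor q\mu(E)\rfloor$ (equivalently, uses the other normalisation convention $0\le c_1\le r-1$), and with that correction your integrality step does go through.
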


\subsection{Hoppe's Criterion over polycyclic varieties.}
Suppose that the picard group Pic$(X) \simeq \ZZ^l$ where $l\geq2$ is an integer then $X$ is a polycyclic variety.
Given a divisor $B$ on $X$ we define $\delta_{\mathscr{L}}(B):= \deg_{\mathscr{L}}\OO_{X}(B)$.
Then one has the following stability criterion {\cite{10}, Theorem 3}:

\begin{theorem}[Generalized Hoppe Criterion]
 Let $G\rightarrow X$ be a holomorphic vector bundle of rank $r\geq2$ over a polycyclic variety $X$ equiped with a polarisation 
 $\mathscr{L}$ if
 \[H^0(X,(\wedge^sG)\otimes\OO_X(B))=0\] 
 for all $B\in\pic(X)$ and $s\in\{1,\ldots,r-1\}$ such that
 $\begin{CD}\displaystyle{\delta_{\mathscr{L}}(B)<-s\mu_{\mathscr{L}}(G)}\end{CD}$ then $G$ is stable and if
 $\begin{CD}\displaystyle{\delta_{\mathscr{L}}(B)\leq-s\mu_{\mathscr{L}}(G)}\end{CD}$ then $G$ is semi-stable.\\
 Conversely if then $G$ is (semi-)stable then  \[H^0(X,G\otimes\OO_X(B))=0\]
 for all $B\in\pic(X)$ and all $s\in\{1,\ldots,r-1\}$ such that
 $\delta_{\mathscr{L}}(B)<-s\mu_{\mathscr{L}}(G)$ or $\delta_{\mathscr{L}}(B)\leq-s\mu_{\mathscr{L}}(G)$.
\end{theorem}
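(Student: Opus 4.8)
The plan is to derive the criterion from the quotient-sheaf characterisation of (semi)stability together with the classical exterior-power device. First recall that $G$ is $\mathscr{L}$-stable (resp.\ $\mathscr{L}$-semistable) precisely when every torsion-free quotient $q\colon G\twoheadrightarrow Q$ with $0<\rk Q<r$ satisfies $\mu_{\mathscr{L}}(Q)>\mu_{\mathscr{L}}(G)$ (resp.\ $\mu_{\mathscr{L}}(Q)\ge\mu_{\mathscr{L}}(G)$); this is equivalent to Definition~2.5 by applying $\deg_{\mathscr{L}}$ and $\rk$ to $0\to\ker q\to G\to Q\to0$, the kernel destabilising $G$ as a subsheaf exactly when $Q$ destabilises it as a quotient. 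So it suffices to translate a destabilising quotient into a nonzero section of a twist of an exterior power of $G$, and to run the translation in reverse.

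For the ``if'' direction I would argue by contraposition. Assume $G$ is not stable and pick a torsion-free quotient $q\colon G\twoheadrightarrow Q$ with $t:=\rk Q\in\{1,\dots,r-1\}$ and $\mu_{\mathscr{L}}(Q)\le\mu_{\mathscr{L}}(G)$. Since $\wedge^{t}$ is right exact, $\wedge^{t}q\colon\wedge^{t}G\twoheadrightarrow\wedge^{t}Q$ is still surjective, and, $X$ being smooth, there is a nonzero morphism $\wedge^{t}Q\to\det Q=\OO_X(D)$ with $[D]=c_1(Q)$ (factor $\wedge^{t}Q$ through its torsion-free quotient, then through its reflexive hull). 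Composing gives a nonzero morphism $\wedge^{t}G\to\OO_X(D)$, i.e.\ a nonzero element of $H^{0}\big(X,(\wedge^{t}G)^{\vee}\otimes\OO_X(D)\big)$. The perfect pairing $\wedge^{t}G\otimes\wedge^{r-t}G\to\det G$ identifies $(\wedge^{t}G)^{\vee}$ with $(\wedge^{s}G)\otimes(\det G)^{-1}$, $s:=r-t$, so for $B\in\pic(X)$ with $[B]=c_1(Q)-c_1(G)$ one gets $H^{0}\big(X,(\wedge^{s}G)\otimes\OO_X(B)\big)\ne0$. The numerics then drop out of additivity of the degree:
\[
\delta_{\mathscr{L}}(B)=\deg_{\mathscr{L}}(Q)-\deg_{\mathscr{L}}(G)=t\,\mu_{\mathscr{L}}(Q)-r\,\mu_{\mathscr{L}}(G)\le(t-r)\,\mu_{\mathscr{L}}(G)=-s\,\mu_{\mathscr{L}}(G),
\]
with strict inequality exactly when $\mu_{\mathscr{L}}(Q)<\mu_{\mathscr{L}}(G)$, i.e.\ when $G$ already fails to be semistable. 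Contraposing the two cases yields the two stated sufficient conditions.

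For the converse I would exploit that, in characteristic zero, $\wedge^{s}G$ is a direct summand of $G^{\otimes s}$, that tensor powers of a (semi)stable bundle are (semi)stable, and that a direct summand of a (semi)stable bundle of the same slope is (semi)stable; consequently, for (semi)stable $G$ every $(\wedge^{s}G)\otimes\OO_X(B)$ is (semi)stable of slope $s\,\mu_{\mathscr{L}}(G)+\delta_{\mathscr{L}}(B)$, using $c_1(\wedge^{s}G)=\binom{r-1}{s-1}c_1(G)$ and $\rk\wedge^{s}G=\binom{r}{s}$, so $\mu_{\mathscr{L}}(\wedge^{s}G)=s\,\mu_{\mathscr{L}}(G)$. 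A nonzero section is an inclusion $\OO_X\hookrightarrow(\wedge^{s}G)\otimes\OO_X(B)$, and comparing the slope of a subsheaf with that of its ambient (semi)stable sheaf forces $s\,\mu_{\mathscr{L}}(G)+\delta_{\mathscr{L}}(B)\ge0$ in the semistable case (and, once one knows $(\wedge^{s}G)\otimes\OO_X(B)$ carries no trivial direct summand, $>0$ in the stable case); this is the contrapositive of the asserted vanishing statements.

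The main obstacle sits in the ``if'' direction and is twofold. First, one must make sense of $\wedge^{t}Q$ and of $\det Q$ when $Q$ is only torsion-free, not locally free: this is precisely where smoothness of $X$ is used, since it guarantees that a rank-one torsion-free sheaf embeds in a line bundle and that $\pic(X)$ is insensitive to the codimension-$\ge2$ locus where $Q$ degenerates, so that the divisor $D$ is globally defined with $[D]=c_1(Q)$. Second, and this is the genuinely delicate bookkeeping, one has to track strict versus non-strict inequalities throughout: the borderline $\delta_{\mathscr{L}}(B)=-s\,\mu_{\mathscr{L}}(G)$ corresponds exactly to a quotient with $\mu_{\mathscr{L}}(Q)=\mu_{\mathscr{L}}(G)$, i.e.\ to a strictly semistable $G$, and it is this equality case that separates the stable conclusion from the semistable one; keeping the ranges of $B$ in the two hypotheses aligned with this is where the care lies.
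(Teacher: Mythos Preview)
The paper does not prove this statement: it is quoted from \cite{10} (Jardim and S\'a~Earp, Theorem~3) as a background tool, and no argument is supplied in the paper itself. There is therefore no ``paper's own proof'' to compare your attempt against.

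Your approach is the standard one and is sound: from a destabilising quotient (equivalently, subsheaf) of rank $t$ you extract, via $\wedge^{t}$ and passage to the determinant line bundle, a nonzero section of some twist $(\wedge^{s}G)\otimes\OO_X(B)$, and then read off the degree bound on $B$; for the converse you use that exterior powers of a (semi)stable bundle remain (semi)stable in characteristic zero, so a section $\OO_X\hookrightarrow(\wedge^{s}G)\otimes\OO_X(B)$ forces the slope inequality. Both halves are correctly argued.

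One point of bookkeeping, which you yourself flagged as delicate: your contraposition literally yields
\[
\bigl(\text{vanishing for all }B\text{ with }\delta_{\mathscr{L}}(B)\le -s\,\mu_{\mathscr{L}}(G)\bigr)\ \Longrightarrow\ G\ \text{stable},
\]
\[
\bigl(\text{vanishing for all }B\text{ with }\delta_{\mathscr{L}}(B)< -s\,\mu_{\mathscr{L}}(G)\bigr)\ \Longrightarrow\ G\ \text{semistable},
\]
which assigns the strict and non-strict inequalities the \emph{opposite} way round from the statement as printed. Your version is the correct one: for instance $G=\OO_{\PP^1}\oplus\OO_{\PP^1}$ is strictly semistable, yet $H^{0}(G(b))=0$ for every $b<0$, so the strict-inequality hypothesis alone cannot force stability. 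The discrepancy therefore reflects a slip in the inequalities as reproduced in the paper's statement, not a gap in your reasoning; just do not assert that your contraposition recovers the conditions \emph{exactly as stated}.
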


\vspace{0.75cm}

\noindent Suppose the ambient space is $X=\PP^{a_1}\times\PP^{a_1}\times\PP^{a_2}\times\PP^{a_2}\times\cdots\times\PP^{a_n}\times\PP^{a_n}$ then $\pic(X) \simeq \ZZ^{2n}$.\\
We denote by $g_1, g_2,\cdots,g_{2n}$ the generators of $\pic(X)$.\\
\\
Denote by $\OO_X(g_1, g_2,\cdots,g_{2n}):= {p_1}^*\OO_{\PP^{a_1}}(g_1)\otimes {p_2}^*\OO_{\PP^{a_1}}(g_2)\otimes {p_3}^*\OO_{\PP^{a_2}}(g_3)\otimes\cdots\otimes {p_{2n}}^*\OO_{\PP^{a_n}}(g_{2n})$,
where $p_1$ and $p_2$ are natural projections  from $X$ onto $\PP^{a_1}$, $p_3$ and $p_4$ are natural projections from $X$ onto $\PP^{a_2}$, $\ldots$ and $p_{2n-1}$ and $p_{2n}$ are natural projections  from $X$ onto $\PP^{a_n}$\\
\\
For any line bundle $\mathscr{L} = \OO_X(g_1, g_2,\cdots,g_{2n})$ on $X$ and a vector bundle $E$, we write 
$E(g_1, g_2,\cdots,g_{2n}) = E\otimes\OO_X(g_1, g_2,\cdots,g_{2n})$ 
and $(g_1, g_2,\cdots,g_{2n}):= 1\cdot[g_1\times\PP^{a_1}]+1\cdot[\PP^{a_1}\times g_2]+1\cdot[g_3\times\PP^{a_2}]+1\cdot[\PP^{a_2}\times g_4]+\cdots+1\cdot[g_{2n-1}\times \PP^{a_n}]+1\cdot[\PP^{a_n}\times g_{2n}]$ to represent its corresponding divisor.\\
\\
The normalization of $E$ on $X$ with respect to $\mathscr{L}$ is defined as follows:\\
Set $d=\deg_{\mathscr{L}}(\OO_X(1,0,\cdots,0))$, since $\deg_{\mathscr{L}}(E(-k_E,0,\cdots,0))=\deg_{\mathscr{L}}(E)-2nk\cdot \rk(E)$ 
there's a unique integer $k_E:=\lceil\mu_\mathscr{L}(E)/d\rceil$ such that $1 - d.\rk(E)\leq \deg_\mathscr{L}(E(-k_E,0,\cdots,0))\leq0$. 
The twisted bundle $E_{{\mathscr{L}}-norm}:= E(-k_E,0,\cdots,0)$ is called the $\mathscr{L}$-normalization of $E$.
Finally we define the linear functional $\delta_{\mathscr{L}}$ on  $\mathbb{Z}^{2n}$ as $\delta_{\mathscr{L}}(p_1,p_2,\cdots,p_{2n}):= \deg_{\mathscr{L}}\OO_{X}(p_1,p_2,\cdots,p_{2n})$.

\begin{proposition}
Let $X$ be a polycyclic variety with Picard number $2n$, let $\mathscr{L}$ be an ample line bundle and
let E be a rank $r>1 $ holomorphic vector bundle over $X$.
If $H^0(X,(\bigwedge^q E)_{{\mathscr{L}}-norm}(p_1,\cdots,p_{2n})) = 0$ for $1\leq q \leq r-1$ and every $(p_1,\cdots,p_{2n})\in \mathbb{Z}^{2n}$ such that $\delta_{\mathscr{L}}\leq0$
then E is $\mathscr{L}$-stable.
\end{proposition}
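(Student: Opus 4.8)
\noindent The plan is to deduce this from the Generalized Hoppe Criterion stated above, by checking that the vanishing hypothesis listed here is exactly the one that criterion asks of $E$. So I would apply that criterion with $G=E$: it then suffices to prove that $H^0\big(X,(\bigwedge^s E)\otimes\OO_X(B)\big)=0$ for every $s\in\{1,\ldots,r-1\}$ and every $B\in\pic(X)\cong\ZZ^{2n}$ with $\delta_{\mathscr{L}}(B)<-s\,\mu_{\mathscr{L}}(E)$. Fix such a pair $(s,B)$.

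First I would record the two elementary Chern-class identities for exterior powers, $\rk(\bigwedge^s E)=\binom{r}{s}$ and $c_1(\bigwedge^s E)=\binom{r-1}{s-1}c_1(E)$, which together give $\mu_{\mathscr{L}}(\bigwedge^s E)=s\,\mu_{\mathscr{L}}(E)$; thus the condition $\delta_{\mathscr{L}}(B)<-s\,\mu_{\mathscr{L}}(E)$ says exactly that $(\bigwedge^s E)\otimes\OO_X(B)$ has negative $\mathscr{L}$-degree. Next I would translate this twist into the language of the $\mathscr{L}$-normalization set up just above the statement: writing $k:=k_{\bigwedge^s E}=\lceil\mu_{\mathscr{L}}(\bigwedge^s E)/d\rceil$, one has $(\bigwedge^s E)_{{\mathscr{L}}-norm}=(\bigwedge^s E)(-k,0,\ldots,0)$, and hence
\[
(\textstyle\bigwedge^s E)\otimes\OO_X(B)\;\cong\;\big((\textstyle\bigwedge^s E)_{{\mathscr{L}}-norm}\big)(p),\qquad p:=B+k\,(1,0,\ldots,0).
\]
Since $\delta_{\mathscr{L}}$ is additive and $\delta_{\mathscr{L}}(1,0,\ldots,0)=d$, this gives $\delta_{\mathscr{L}}(p)=\delta_{\mathscr{L}}(B)+kd$. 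The key point is then that $\delta_{\mathscr{L}}(p)\leq0$: this follows from $\delta_{\mathscr{L}}(B)<-s\,\mu_{\mathscr{L}}(E)=-\mu_{\mathscr{L}}(\bigwedge^s E)$ together with the inequality $kd\geq\mu_{\mathscr{L}}(\bigwedge^s E)$ built into the definition of the normalization (equivalently $\deg_{\mathscr{L}}\big((\bigwedge^s E)_{{\mathscr{L}}-norm}\big)\leq0$), using that $\delta_{\mathscr{L}}$ takes integer values on $\ZZ^{2n}$. Granting $\delta_{\mathscr{L}}(p)\leq0$, the hypothesis applied with $q=s$ and $(p_1,\ldots,p_{2n})=p$ gives $H^0\big(X,(\bigwedge^s E)_{{\mathscr{L}}-norm}(p)\big)=0$, i.e. $H^0\big(X,(\bigwedge^s E)\otimes\OO_X(B)\big)=0$. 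As $(s,B)$ was arbitrary subject to $\delta_{\mathscr{L}}(B)<-s\,\mu_{\mathscr{L}}(E)$, the Generalized Hoppe Criterion then yields that $E$ is $\mathscr{L}$-stable.

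The hard part will be the degree bookkeeping in the middle step, namely confirming that after twisting and normalizing the bundle genuinely lands in the region $\{\delta_{\mathscr{L}}\leq0\}$ on which the hypothesis is assumed; one must check that the rounding in $k=\lceil\mu_{\mathscr{L}}(\bigwedge^s E)/d\rceil$ does not overshoot, and at the boundary it may help to invoke the monotonicity $H^0(F(p'))\hookrightarrow H^0(F(p))$ for $p'\leq p$ componentwise (sections can only be lost when subtracting effective divisors) to reduce to the cases actually covered. The remaining ingredients—the exterior-power Chern-class formulas and the identification of a twist with a normalization—are routine.
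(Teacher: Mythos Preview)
The paper does not give a proof of this proposition: it is stated immediately after the Generalized Hoppe Criterion and the paragraph setting up the $\mathscr{L}$-normalization, and is meant simply as that criterion rewritten in the normalized form for $\pic(X)\cong\ZZ^{2n}$. Your plan---unwind the normalization so that $(\bigwedge^s E)\otimes\OO_X(B)\cong(\bigwedge^s E)_{\mathscr{L}\text{-}norm}(p)$ with $p=B+k_{\bigwedge^s E}(1,0,\ldots,0)$, then feed the hypothesis into Theorem~2.7---is exactly the intended reduction, and the identities $\mu_{\mathscr{L}}(\bigwedge^s E)=s\,\mu_{\mathscr{L}}(E)$ and $\delta_{\mathscr{L}}(p)=\delta_{\mathscr{L}}(B)+kd$ are correct.

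The step you flag as ``the hard part'' is, however, a genuine gap as written. From $\delta_{\mathscr{L}}(B)<-s\,\mu_{\mathscr{L}}(E)$ and $k=\lceil s\,\mu_{\mathscr{L}}(E)/d\rceil$ one does \emph{not} in general get $\delta_{\mathscr{L}}(B)+kd\leq 0$: for instance $d=3$, $s\,\mu_{\mathscr{L}}(E)=4$, $\delta_{\mathscr{L}}(B)=-5$ gives $k=2$ and $\delta_{\mathscr{L}}(p)=1>0$. Your proposed monotonicity repair also points the wrong way: since $\mathscr{L}$ is ample, $\delta_{\mathscr{L}}$ is strictly increasing in each coordinate, so any $p''\geq p$ componentwise has $\delta_{\mathscr{L}}(p'')\geq\delta_{\mathscr{L}}(p)>0$, and you cannot embed $H^0$ at $p$ into an $H^0$ covered by the hypothesis. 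In the paper this never matters, because in the actual application (Theorem~3.5) one proves the stronger vanishing $H^0(\bigwedge^q T(-p_1,\ldots,-p_{2n}))=0$ for all $(p_i)$ with $\sum p_i\geq 0$ directly, without passing through the normalization at all; but as a freestanding derivation of the proposition from Theorem~2.7 you would need either an additional hypothesis (e.g.\ $d\mid\delta_{\mathscr{L}}(B)$ for all $B$, or $d=1$) or a different normalization convention to close the gap.
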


\begin{definition}
A vector bundle $E$ on $X$ is said to be
\begin{enumerate}
\renewcommand{\theenumi}{\alph{enumi}}
 \item indecomposable if it does not admit a direct sum decomposition of two proper subbundles $E_1$ and $E_2$ i.e.
 $E\ncong E_1\oplus E_2$ otherwise $E$ is decomposable.
\item simple if its only endomorphisms are the homotheties i.e. $\Hom(E,E)=k$ equivalently $h^0(X,E\otimes E^*)=1$.
\end{enumerate}
\end{definition}

\begin{proposition}
Let $0\rightarrow E \rightarrow F \rightarrow G\rightarrow0$ be an exact sequence of vector bundles. \\
Then we have the following exact sequences involving exterior and symmetric powers:\\
\begin{enumerate}
\renewcommand{\theenumi}{\alph{enumi}}
 \item $0\lra\bigwedge^q E \lra\bigwedge^q F \lra\bigwedge^{q-1} F\otimes G\lra\cdots \lra F\otimes S^{q-1}G \lra S^{q}G\lra0$\\
 \item $0\lra S^{q}E \lra S^{q-1}E\otimes F \lra\cdots \lra E\otimes\bigwedge^{q-1}F\lra\bigwedge^q F \lra\bigwedge^q G\lra 0$
\end{enumerate}
\end{proposition}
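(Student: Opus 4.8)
The plan is to prove exactness of both complexes by reducing to the split case and then invoking the acyclicity of the Koszul complex. The crucial observation is that exactness of a complex of locally free sheaves is a local property, so it may be verified on stalks, and that every short exact sequence of vector bundles is locally split. I would therefore fix a point $x\in X$, choose an open neighbourhood $U$ on which the given sequence splits so that $F|_U\cong E|_U\oplus G|_U$ compatibly with the structure maps, and reduce to establishing the two complexes as exact sequences of $\OO_U$-modules in this split situation.

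First I would pin down the differentials so that they are canonical, hence glue to global maps of sheaves. In (a) the injection $\bigwedge^q E\hookrightarrow\bigwedge^q F$ is $\bigwedge^q$ applied to $E\hookrightarrow F$, the final surjection $F\otimes S^{q-1}G\to S^q G$ is the projection $F\to G$ followed by symmetric multiplication, and each interior map $\bigwedge^{q-i}F\otimes S^i G\to\bigwedge^{q-i-1}F\otimes S^{i+1}G$ is the composite of the comultiplication $\bigwedge^{q-i}F\to\bigwedge^{q-i-1}F\otimes F$, the projection $F\to G$ on the new tensor factor, and the multiplication $G\otimes S^i G\to S^{i+1}G$; the differentials for (b) are defined symmetrically, interchanging the roles of $\bigwedge$ and $S$. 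A direct check shows that consecutive compositions vanish, so these are genuine complexes, and since the maps are assembled from functorial operations they are independent of the chosen splitting and patch together globally.

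With the maps fixed, in the split case I would expand every term by the classical decompositions $\bigwedge^q(E\oplus G)\cong\bigoplus_{i+j=q}\bigwedge^i E\otimes\bigwedge^j G$ and $S^q(E\oplus G)\cong\bigoplus_{i+j=q}S^i E\otimes S^j G$. Sorting the resulting summands by their exterior (respectively symmetric) degree in $E$, the complex (a) decomposes as a direct sum, indexed by that degree, of copies of the Koszul-type complex $0\to\bigwedge^q G\to\bigwedge^{q-1}G\otimes S^1 G\to\cdots\to G\otimes S^{q-1}G\to S^q G\to0$ tensored with a fixed power of $E$. Because we work in characteristic zero this complex is acyclic for every $q\geq1$, which yields the exactness of (a); the complex (b) is treated in the same manner, or deduced from (a) by dualizing and using $(\bigwedge^q\,\cdot\,)^\ast\cong\bigwedge^q(\,\cdot\,)^\ast$ together with $(S^q\,\cdot\,)^\ast\cong S^q(\,\cdot\,)^\ast$.

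The main obstacle is the bookkeeping in this decomposition step: one must verify that the canonical differentials genuinely respect the splitting induced by the $E$-degree and that, on each graded piece, they restrict exactly to the Koszul differential with no surviving off-diagonal terms. Once this matching is confirmed, exactness follows immediately from Koszul acyclicity, and the global statement is obtained because the splitting, although only local, forces each homology sheaf to vanish in a neighbourhood of every point, hence everywhere.
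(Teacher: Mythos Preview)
The paper does not actually prove this proposition: it is stated in the preliminaries section as a standard fact (essentially the Koszul-type resolutions attached to a short exact sequence of bundles) and is used later without further justification. There is therefore nothing in the paper to compare your argument against.

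That said, your outline is the standard and correct route. Localising to split the sequence, defining the differentials functorially so that they globalise, and then filtering by the $E$-degree (for (a)) or the $G$-degree (for (b)) to reduce each strand to the acyclic complex $0\to\bigwedge^m V\to\bigwedge^{m-1}V\otimes S^1V\to\cdots\to S^m V\to0$ is exactly how one proves these sequences. One small imprecision: the strands you obtain are Koszul complexes of \emph{varying} length $q-a$ (with the top strand $a=q$ degenerating to the identity $\bigwedge^q E\to\bigwedge^q E$), not all of length $q$ as your phrasing suggests; this does not affect the argument. Your caveat about characteristic zero is appropriate for (b) with the comultiplication differential you describe, and it is harmless here since the paper is implicitly working over $\mathbb{C}$.
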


\begin{theorem}[K\"{u}nneth formula]
 Let $X$ and $Y$ be projective varieties over a field $k$. 
 Let $\mathscr{F}$ and $\mathscr{G}$ be coherent sheaves on $X$ and $Y$ respectively.
 Let $\mathscr{F}\boxtimes\mathscr{G}$ denote $p_1^*(\mathscr{F})\otimes p_2^*(\mathscr{G})$\\
 then $\displaystyle{H^m(X\times Y,\mathscr{F}\boxtimes\mathscr{G}) \cong \bigoplus_{p+q=m} H^p(X,\mathscr{F})\otimes H^q(Y,\mathscr{G})}$.
\end{theorem}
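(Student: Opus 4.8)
The plan is to reduce the statement to the purely algebraic Künneth formula for complexes of $k$-vector spaces, by computing all three cohomologies with Čech complexes attached to compatible affine covers. First I would choose finite affine open covers $\mathfrak{U}=\{U_i\}$ of $X$ and $\mathfrak{V}=\{V_j\}$ of $Y$; such covers exist because $X$ and $Y$ are projective, hence separated and of finite type over $k$, and separatedness guarantees that every finite intersection $U_{i_0\cdots i_p}:=U_{i_0}\cap\cdots\cap U_{i_p}$ is again affine. Since a product of two affine $k$-varieties is affine, the family $\{U_i\times V_j\}$ is a finite affine open cover of $X\times Y$, and its finite intersections $U_{i_0\cdots i_p}\times V_{j_0\cdots j_q}$ are affine as well.

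Next I would invoke the comparison between Čech and derived-functor cohomology. Because $\mathscr{F}$, $\mathscr{G}$ and $\mathscr{F}\boxtimes\mathscr{G}$ are coherent, hence quasi-coherent, and the higher cohomology of a quasi-coherent sheaf on an affine scheme vanishes, the chosen covers are acyclic; by Leray's theorem on acyclic covers the Čech cohomologies compute the sheaf cohomologies, so that
\[
\check H^{\bullet}(\mathfrak U,\mathscr F)\cong H^{\bullet}(X,\mathscr F),\quad
\check H^{\bullet}(\mathfrak V,\mathscr G)\cong H^{\bullet}(Y,\mathscr G),\quad
\check H^{\bullet}(\mathfrak U\times\mathfrak V,\mathscr F\boxtimes\mathscr G)\cong H^{\bullet}(X\times Y,\mathscr F\boxtimes\mathscr G).
\]

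The crux is the local identification of sections. On an affine piece I would write $U_{i_0\cdots i_p}=\operatorname{Spec}A$ and $V_{j_0\cdots j_q}=\operatorname{Spec}B$, so that the corresponding piece of $X\times Y$ is $\operatorname{Spec}(A\otimes_k B)$; if $\mathscr F$ restricts to $\widetilde M$ and $\mathscr G$ to $\widetilde N$, then by the very definition of the external tensor product the sections of $\mathscr F\boxtimes\mathscr G$ over this piece are $M\otimes_k N$, that is
\[
(\mathscr F\boxtimes\mathscr G)\bigl(U_{i_0\cdots i_p}\times V_{j_0\cdots j_q}\bigr)\cong \mathscr F(U_{i_0\cdots i_p})\otimes_k\mathscr G(V_{j_0\cdots j_q}).
\]
Assembling these isomorphisms over all indices, and checking compatibility with the two Čech differentials, identifies the total complex of the Čech bicomplex of $\mathscr F\boxtimes\mathscr G$ with the tensor product of complexes $C^{\bullet}(\mathfrak U,\mathscr F)\otimes_k C^{\bullet}(\mathfrak V,\mathscr G)$. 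I expect this bookkeeping — matching the signs in the total differential and the ordering of indices in the product cover — to be the main technical obstacle, although it is routine.

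Finally I would apply the algebraic Künneth theorem for a tensor product of two complexes of modules over the field $k$. Since $k$ is a field every $k$-module is flat, so all $\operatorname{Tor}$ corrections vanish and one obtains
\[
H^m\bigl(C^{\bullet}(\mathfrak U,\mathscr F)\otimes_k C^{\bullet}(\mathfrak V,\mathscr G)\bigr)\cong\bigoplus_{p+q=m}\check H^{p}(\mathfrak U,\mathscr F)\otimes_k\check H^{q}(\mathfrak V,\mathscr G).
\]
Combining this with the comparison isomorphisms of the second step yields $H^m(X\times Y,\mathscr F\boxtimes\mathscr G)\cong\bigoplus_{p+q=m}H^p(X,\mathscr F)\otimes_k H^q(Y,\mathscr G)$, which is the assertion. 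I note that the same conclusion can be reached without covers by combining flat base change along the projection $p_1$ with the projection formula and the degeneration of the resulting spectral sequence, again precisely because the ground ring $k$ is a field.
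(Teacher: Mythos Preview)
Your argument is correct and is the standard textbook proof of the K\"unneth formula: compute everything by \v{C}ech complexes on products of affine covers, identify the \v{C}ech complex of the external tensor product with the tensor product of the two \v{C}ech complexes, and apply the algebraic K\"unneth theorem over a field so that all $\operatorname{Tor}$ terms vanish. The alternate spectral-sequence route you mention at the end is equally valid.

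There is, however, nothing to compare against: the paper states the K\"unneth formula as a background preliminary (Theorem~2.11) and offers no proof, treating it as a well-known fact to be invoked later (in Lemma~2.12 and in the simplicity argument of Theorem~3.6). So your proposal does not differ from the paper's approach so much as \emph{supply} what the paper omits.
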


\begin{lemma}
 Let $X =\PP^{a_1}\times\PP^{a_1}\times\PP^{a_2}\times\PP^{a_2}\times\cdots\times\PP^{a_n}\times\PP^{a_n}$ then\\
$\displaystyle{H^t(X,\OO_X(p_1, p_2,\cdots,p_{2n}) )\cong \bigoplus_{q_1+q_2+\cdots+q_{2n}=t} Y_1\otimes Y_2\otimes\cdots\otimes Y_n}$  where\\
  $Y_1 = H^{q_1}(\PP^{a_1},\OO_{\PP^{a_1}}(p_1))\otimes H^{q_2}(\PP^{a_1},\OO_{\PP^{a_1}}(p_2))$, \\
  $Y_2 = H^{q_3}(\PP^{a_2},\OO_{\PP^{a_2}}(p_3))\otimes H^{q_4}(\PP^{a_2},\OO_{\PP^{a_2}}(p_4))$, $\cdots$ and \\
  $Y_n = H^{q_{2n-1}}(\PP^{a_n},\OO_{\PP^{a_n}}(p_{2n-1}))\otimes H^{q_{2n}}(\PP^{a_n},\OO_{\PP^{a_n}}(p_{2n}))$.
 \end{lemma}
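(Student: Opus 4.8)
The plan is to reduce the statement to an iterated application of the Künneth formula (Theorem stated above). First I would record that, by the very definition of $\OO_X(p_1,\ldots,p_{2n})$ as $\bigotimes_i p_i^*\OO_{\PP^{a_{\lceil i/2\rceil}}}(p_i)$, this line bundle is the external tensor product of the line bundles $\OO_{\PP^{a_1}}(p_1)$, $\OO_{\PP^{a_1}}(p_2)$, $\OO_{\PP^{a_2}}(p_3)$, $\ldots$, $\OO_{\PP^{a_n}}(p_{2n})$ on the $2n$ projective-space factors of $X$. Writing $X=\PP^{a_1}\times X'$ with $X'=\PP^{a_1}\times\PP^{a_2}\times\PP^{a_2}\times\cdots\times\PP^{a_n}\times\PP^{a_n}$, the line bundle splits as $\OO_{\PP^{a_1}}(p_1)\boxtimes\OO_{X'}(p_2,\ldots,p_{2n})$, so the Künneth formula gives
\[
H^t\bigl(X,\OO_X(p_1,\ldots,p_{2n})\bigr)\cong\bigoplus_{q_1+t'=t}H^{q_1}\bigl(\PP^{a_1},\OO_{\PP^{a_1}}(p_1)\bigr)\otimes H^{t'}\bigl(X',\OO_{X'}(p_2,\ldots,p_{2n})\bigr).
\]

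Second, I would iterate: the sheaf $\OO_{X'}(p_2,\ldots,p_{2n})$ is again an external tensor product of line bundles on projective spaces, so peeling off one factor at a time (formally, an induction on the number of factors, with the one-factor case being trivial) yields
\[
H^{t'}\bigl(X',\OO_{X'}(p_2,\ldots,p_{2n})\bigr)\cong\bigoplus_{q_2+\cdots+q_{2n}=t'}\;\bigotimes_{j=2}^{2n}H^{q_j}\bigl(\PP^{a_{\lceil j/2\rceil}},\OO_{\PP^{a_{\lceil j/2\rceil}}}(p_j)\bigr).
\]
Substituting this into the previous display and merging the two nested direct sums into a single sum over all $(q_1,\ldots,q_{2n})$ with $q_1+\cdots+q_{2n}=t$, I obtain $H^t(X,\OO_X(p_1,\ldots,p_{2n}))\cong\bigoplus_{q_1+\cdots+q_{2n}=t}\bigotimes_{j=1}^{2n}H^{q_j}(\PP^{a_{\lceil j/2\rceil}},\OO(p_j))$; grouping the $2n$ tensor factors in consecutive pairs produces exactly $Y_1\otimes Y_2\otimes\cdots\otimes Y_n$ as defined in the statement.

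The only points requiring care — and hence the main, though minor, obstacle — are bookkeeping: one must check that the external-product structure is genuinely respected at each stage of the peeling, so that Künneth applies with $\mathscr F=\OO_{\PP^{a_1}}(p_1)$ and $\mathscr G=\OO_{X'}(p_2,\ldots,p_{2n})$; that all cohomology spaces in sight are finite-dimensional $k$-vector spaces, so that the tensor products are honest, associative, and carry no higher Tor correction terms (automatic since we work over the field $k$); and that the apparent infinite direct sum is in fact finite, which follows because each $q_j$ ranges only over $0\le q_j\le a_{\lceil j/2\rceil}$ and the relevant cohomology of a line bundle on projective space is concentrated in a single degree. None of these is a real difficulty; the content of the lemma is entirely the iterated Künneth formula applied to a Segre-type line bundle.
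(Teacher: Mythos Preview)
Your proposal is correct and matches the paper's approach: the paper states this lemma immediately after the K\"unneth formula (Theorem 2.10) without giving an explicit proof, evidently intending it as a direct consequence obtained by iterating K\"unneth across the $2n$ projective-space factors, which is exactly what you do. Your additional remarks on finite-dimensionality and the finiteness of the direct sum are sound and make the argument more careful than the paper's bare statement.
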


\begin{theorem}[\cite{16}, Theorem 4.1]
 Let $n\geq1$ be an integer  and $d$ be an integer. We denote by $S_d$ the space of homogeneous polynomials of degree $d$ in 
 $n+1$ variables (conventionally if $d<0$ then $S_d=0$). Then the following statements are true:
 \begin{enumerate}
 \renewcommand{\theenumi}{\alph{enumi}}
  \item $H^0(\PP^n,\OO_{\PP^n}(d))=S_d$ for all $d$.
  \item $H^i(\PP^n,\OO_{\PP^n}(d))=0$ for $1<i<n$ and for all $d$.
  \item $H^n(\PP^n,\OO_{\PP^n}(d))\cong H^0(\PP^n,\OO_{\PP^n}(-d-n-1))$.
 \end{enumerate}
\end{theorem}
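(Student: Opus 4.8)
The plan is to determine all the groups $H^i(\PP^n,\OO_{\PP^n}(d))$ at once, for every $d\in\ZZ$ simultaneously, by computing \v{C}ech cohomology with respect to the standard affine open cover $\mathfrak{U}=\{U_0,\dots,U_n\}$, where $U_i=\{x_i\neq0\}\cong\mathbb{A}^n$. Each finite intersection $U_{i_0}\cap\cdots\cap U_{i_p}=\{x_{i_0}\cdots x_{i_p}\neq0\}$ is again affine, so by the vanishing of higher cohomology of quasi-coherent sheaves on affine schemes together with Leray's theorem for acyclic covers, the \v{C}ech complex $C^{\bullet}(\mathfrak{U},\OO_{\PP^n}(d))$ computes $H^{\bullet}(\PP^n,\OO_{\PP^n}(d))$. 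This reduces everything to linear algebra over the polynomial ring $S=k[x_0,\dots,x_n]$.

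For part (a) I would observe that a global section of $\OO_{\PP^n}(d)$ is a compatible family of degree-$d$ homogeneous rational functions, each regular on one $U_i$; compatibility on the overlaps forces the common expression to be an honest polynomial, hence to lie in the degree-$d$ graded piece $S_d$, and conversely every element of $S_d$ determines such a section. When $d<0$ there are none, which matches the stated convention. In \v{C}ech language this is the statement $\ker\bigl(\prod_i (S_{x_i})_d\to\prod_{i<j}(S_{x_ix_j})_d\bigr)=S_d$.

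For parts (b) and (c) I would assemble the complexes over all $d$ into the single graded complex of localizations
\[
\textstyle\prod_{i} S_{x_i}\lra\prod_{i<j} S_{x_ix_j}\lra\cdots\lra S_{x_0x_1\cdots x_n},
\]
namely the \v{C}ech complex of the $S$-module $S$ with respect to $x_0,\dots,x_n$, and then carry out the monomial bookkeeping. The claims are: this complex is exact in every intermediate position $1\le i\le n-1$, which yields (b); and the cokernel of the last map is the free $k$-module on the Laurent monomials $x_0^{m_0}\cdots x_n^{m_n}$ all of whose exponents satisfy $m_j\le-1$. Restricting to degree $d$ and applying the bijection $m_j\mapsto-1-m_j$ identifies this cokernel with the $k$-span of the degree-$(-d-n-1)$ monomials with nonnegative exponents, that is with $S_{-d-n-1}=H^0(\PP^n,\OO_{\PP^n}(-d-n-1))$, giving (c). Alternatively, (c) is an instance of Serre duality, $H^n(\PP^n,\OO_{\PP^n}(d))\cong H^0(\PP^n,\omega_{\PP^n}(-d))^{\vee}$, together with $\omega_{\PP^n}\cong\OO_{\PP^n}(-n-1)$ and the fact that over a field a finite-dimensional vector space is (non-canonically) isomorphic to its dual.

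The only part that is not purely formal is the combinatorial step just described: pinning down precisely which Laurent monomials lie in the image of the penultimate \v{C}ech differential, so as to compute the top cokernel, and checking exactness in the middle range — this is exactly where the geometry of $\PP^n$ (as opposed to that of a more general toric variety) is used, and I expect it to be the main obstacle. Since this statement enters the paper only as a known ingredient, I would ultimately cite \cite{16} and leave the sketch above as an indication of proof.
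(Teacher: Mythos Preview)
The paper does not prove this theorem at all: it is quoted verbatim as a standard result with the citation \cite{16}, Theorem~4.1, and no argument is supplied. Your final sentence---that you would ultimately cite \cite{16} and leave only a sketch---therefore matches exactly what the paper does, and the sketch you give (the \v{C}ech computation on the standard affine cover, with the monomial bookkeeping for the top cohomology and the Serre-duality reformulation) is the classical argument and is correct. In short, you have done strictly more than the paper, and nothing in your proposal needs fixing.
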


We generalize to a Cartesian $2n$-space lemma 2.13 \cite{13} for our purpose in this work.
\begin{lemma}
If  $p_1+p_2+\cdots+p_{2n}>0$ then $h^p(X,\OO_X (-p_1,-p_2,\cdots,-p_{2n})^{\oplus k}) = 0$ where $X = \PP^{a_1}\times\PP^{a_1}\times\PP^{a_2}\times\PP^{a_2}\times\cdots\times\PP^{a_n}\times\PP^{a_n}$ and for $0\leq p< \dim(X) -1$, for $k$ a nonnegative integer.
\end{lemma}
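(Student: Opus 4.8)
The plan is to prove the vanishing by reducing the cohomology of $\OO_X(-p_1,\dots,-p_{2n})$ to a tensor product of cohomologies of line bundles on the individual projective factors, via the K\"unneth formula (Theorem 2.11) as packaged in Lemma 2.12. Since the $\oplus k$ is just a $k$-fold direct sum, it suffices to prove $h^p(X,\OO_X(-p_1,\dots,-p_{2n}))=0$ for the stated range of $p$, and the direct sum will vanish as well. So the real content is a statement about $\OO_X(-p_1,\dots,-p_{2n})$ on the product.

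First I would invoke Lemma 2.12 to write
\[
H^p(X,\OO_X(-p_1,-p_2,\dots,-p_{2n})) \cong \bigoplus_{q_1+\dots+q_{2n}=p} \bigotimes_{j=1}^{2n} H^{q_j}\bigl(\PP^{a_{\lceil j/2\rceil}},\OO(-p_j)\bigr).
\]
A term in this direct sum is nonzero only if \emph{every} factor $H^{q_j}(\PP^{a_{i_j}},\OO(-p_j))$ is nonzero. By Theorem 2.13 (Bott's formula for projective space), for a single factor $\PP^{a}$ the group $H^{q}(\PP^{a},\OO(-m))$ is nonzero only when $q=0$ (which forces $m\le 0$) or $q=a$ (which forces $m\ge a+1$, by part (c) and part (a)); it is zero for all intermediate $q$. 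Thus in any nonvanishing K\"unneth summand, each $q_j\in\{0,a_{i_j}\}$, and hence $p=\sum q_j$ is a sum of a subset of the $a_{i_j}$'s.

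Now I would argue by contradiction. Suppose the summand is nonzero for some $p$ with $0\le p<\dim X-1$. Partition $\{1,\dots,2n\}$ into $S=\{j: q_j=0\}$ and $T=\{j: q_j=a_{i_j}\}$. For $j\in S$ we need $-p_j\le 0$, i.e. $p_j\ge 0$; for $j\in T$ we need $-p_j\ge a_{i_j}+1$, i.e. $p_j\le -a_{i_j}-1<0$. The hypothesis $p_1+\dots+p_{2n}>0$ then forces $S\ne\emptyset$, so $T\ne\{1,\dots,2n\}$; on the other hand $p=\sum_{j\in T}a_{i_j}$. If $T=\emptyset$ then $p=0$, and I must still rule this out: $p=0$ means the summand is $\bigotimes_j H^0(\PP^{a_{i_j}},\OO(-p_j))$, which requires all $p_j\ge 0$, hence $\sum p_j>0$ is possible, but then $H^0(\PP^{a_{i_j}},\OO(-p_j))=S_{-p_j}$ is nonzero only if $p_j=0$ for all $j$, contradicting $\sum p_j>0$; so $p=0$ is impossible. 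If $T$ is a nonempty proper subset, then $p=\sum_{j\in T}a_{i_j}\le \bigl(\sum_{j}a_{i_j}\bigr)-a_{i_{j_0}}$ for any $j_0\in S$, and since $\dim X=\sum_{j=1}^{2n}a_{i_j}$ (each $a_i$ appearing twice, so $\dim X=2\sum_{i=1}^n a_i$) and the omitted index contributes at least $a_{i_{j_0}}\ge 1$, we get $p\le \dim X-1$; to get the strict inequality $p<\dim X-1$ ruling out equality one notes that $T$ proper means at least one $q_j=0$, but if exactly one $q_j=0$ and that $a_{i_j}=1$ then $p=\dim X-1$, which is exactly the boundary case the hypothesis $p<\dim X-1$ excludes. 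So on the range $0\le p<\dim X-1$ every K\"unneth summand vanishes, giving the claim.

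The main obstacle, and the place to be careful, is the bookkeeping at the two extremes: the summand with all $q_j=0$ (where one must combine the $H^0$ vanishing of $\OO(-p_j)$ for $p_j>0$ with the hypothesis $\sum p_j>0$) and the near-top-degree summands where $T$ omits only one small factor (where the bound $p<\dim X-1$ is exactly sharp, which is why that bound appears in the statement). Everything else is a direct application of Lemma 2.12 and Theorem 2.13; no genuinely hard analysis is involved, only a careful case analysis on which cohomological degrees $q_j$ can be nonzero on each factor.
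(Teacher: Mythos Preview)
The paper itself does not supply a proof of this lemma; it is merely announced as a generalization of a result in \cite{13}. Your K\"unneth-plus-Bott approach (Lemma 2.12 together with Theorem 2.13) is the natural one, and your reduction to a single summand is fine.

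There are, however, two genuine problems. First, your sign conditions are reversed. For $j\in S$ (where $q_j=0$) the nonvanishing of $H^0(\PP^{a_{i_j}},\OO(-p_j))=S_{-p_j}$ requires $-p_j\ge 0$, i.e.\ $p_j\le 0$, not $p_j\ge 0$ as you wrote; dually, for $j\in T$ one needs $p_j\ge a_{i_j}+1>0$. With the correct signs the hypothesis $\sum p_j>0$ forces $T\ne\emptyset$, not $S\ne\emptyset$.

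Second, and more seriously, your handling of the case ``$T$ nonempty and proper'' does not close. You correctly derive $p=\sum_{j\in T}a_{i_j}\le \dim X-1$, but you never exclude the intermediate values $1\le p\le \dim X-2$, all of which lie inside the range $0\le p<\dim X-1$ that the lemma claims to cover. In fact they \emph{cannot} be excluded: the lemma as stated is false. On $X=\PP^1\times\PP^1\times\PP^1\times\PP^1$ (so $\dim X=4$ and the claimed range is $p\in\{0,1,2\}$) take $(p_1,p_2,p_3,p_4)=(5,-1,-1,-1)$; then $\sum p_j=2>0$, yet the K\"unneth summand with $(q_1,q_2,q_3,q_4)=(1,0,0,0)$ gives
\[
H^1(\PP^1,\OO(-5))\otimes H^0(\PP^1,\OO(1))^{\otimes 3}\cong k^4\otimes(k^2)^{\otimes 3}\ne 0,
\]
so $h^1(X,\OO_X(-5,1,1,1))\ne 0$.

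Your argument \emph{is} correct at $p=0$: then $T=\emptyset$ is forced, hence every $p_j\le 0$, contradicting $\sum p_j>0$. That is all the paper actually uses in Theorem~3.6. In the later application in Theorem~3.7 the line bundles involved (e.g.\ $\OO_X(-2,\ldots,-2)$ and the $\mathscr{G}'_i$) have every entry $\le -1$, i.e.\ every $p_j\ge 1$; under that additional hypothesis $S=\emptyset$ is forced and the only possibly nonzero K\"unneth term sits in top degree $p=\dim X$, so vanishing holds for all $p<\dim X$. That stronger hypothesis is presumably what the lemma is meant to carry, and what you should prove instead.
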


\begin{lemma}[\cite{11}, Lemma 10]
Let $A$ and $B$ be vector bundles canonically pulled back from $A'$ on $\PP^n$ and $B'$ on $\PP^m$ then\\
$\displaystyle{H^q(\bigwedge^s(A\otimes B))=
\sum_{k_1+\cdots+k_s=q}\big\{\bigoplus_{i=1}^{s}(\sum_{j=0}^s\sum_{m=0}^{k_i}H^m(\wedge^j(A))\otimes(H^{k_i-m}(\wedge^{s-j}(B)))) \big\}}$.
\end{lemma}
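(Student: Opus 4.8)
The plan is to reduce the computation to line bundles and then apply the K\"unneth formula (Theorem 2.11) one factor at a time. Two ingredients are needed: a decomposition of $\bigwedge^{s}(A\otimes B)$ into \emph{external} products $F\boxtimes G=p_{1}^{*}F\otimes p_{2}^{*}G$, with $F$ built from exterior powers of $A'$ on $\PP^{n}$ and $G$ from exterior powers of $B'$ on $\PP^{m}$; and the K\"unneth formula, which rewrites $H^{q}(X,F\boxtimes G)$ as $\bigoplus_{a+b=q}H^{a}(\PP^{n},F)\otimes H^{b}(\PP^{m},G)$. Since $\bigwedge^{\bullet}$, $H^{\bullet}$, $\otimes$ and $\boxtimes$ all commute with finite direct sums, it suffices to treat the case where $A'$ and $B'$ are direct sums of line bundles — which is the setting in which the lemma is used — the general case following by the splitting principle: pull back along the product of the full flag bundles of $A'$ and $B'$, whose positive direct images vanish, so cohomology is unchanged.

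Writing $A'=\bigoplus_{i}\OO_{\PP^{n}}(a_{i})$ and $B'=\bigoplus_{l}\OO_{\PP^{m}}(b_{l})$, so that on $X=\PP^{n}\times\PP^{m}$ the bundle $A\otimes B=\bigoplus_{i,l}\OO_{X}(a_{i},b_{l})$ is a direct sum of line bundles, one gets
\[
\bigwedge^{s}(A\otimes B)\ \cong\ \bigoplus_{|S|=s}\ \bigotimes_{(i,l)\in S}\OO_{X}(a_{i},b_{l}),
\]
the sum running over $s$-element subsets $S$ of the set of index pairs $(i,l)$. Collecting these subsets according to which exterior powers of $A$ and of $B$ they cut out organizes the right-hand side as a direct sum of external products of the shape $\bigwedge^{j}(A')\boxtimes\bigwedge^{s-j}(B')$ with $0\le j\le s$; in characteristic zero this is the content of the Cauchy decomposition $\bigwedge^{s}(A\otimes B)\cong\bigoplus_{\lambda\vdash s}\mathbb{S}_{\lambda}(A)\otimes\mathbb{S}_{\lambda'}(B)$, whose Schur summands collapse to exterior powers here precisely because $A'$ and $B'$ are sums of line bundles.

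Next I would apply the K\"unneth formula (Theorem 2.11) to each external summand,
\[
H^{k}\bigl(X,\ \bigwedge^{j}(A')\boxtimes\bigwedge^{s-j}(B')\bigr)\ \cong\ \bigoplus_{m=0}^{k}H^{m}\bigl(\PP^{n},\bigwedge^{j}(A')\bigr)\otimes H^{k-m}\bigl(\PP^{m},\bigwedge^{s-j}(B')\bigr),
\]
noting that $H^{m}(\PP^{n},\bigwedge^{j}A')=H^{m}(\bigwedge^{j}A)$ because the pullback along $p_{1}$ does not change cohomology. Summing over the subsets $S$, over the distributions $k_{1}+\dots+k_{s}=q$ of the total cohomological degree among the $s$ chosen tensor factors, and over the internal K\"unneth splittings $m+(k_{i}-m)$, and then reindexing, reproduces the stated identity; each group on the right is a sum of Bott cohomology groups of line bundles on one projective space, computed by Theorem 2.13 (and by Lemma 2.12 on a product of projective spaces) if one wants an explicit answer.

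\emph{Main obstacle.} The delicate point is the decomposition step: showing that the graded pieces of $\bigwedge^{s}(A\otimes B)$ are external products of exterior powers of $A'$ and $B'$ rather than of more general Schur functors. In characteristic zero the Cauchy decomposition genuinely splits and a Schur functor of a sum of line bundles is again a sum of line bundles, so the reduction is immediate. In positive characteristic $\bigwedge^{s}(A\otimes B)$ only carries a filtration with these associated graded pieces, and one must apply the exact sequences of Proposition 2.10 to the tautological filtrations of $A$ and of $B$ and check that the resulting hyper-cohomology spectral sequence contributes nothing beyond the listed groups — equivalently, that the connecting maps vanish on cohomology, which holds because each piece is a sum of line bundles $\OO_{X}(p,p')$ on a product of projective spaces whose cohomology (Lemma 2.12, Theorem 2.13) is concentrated in a single degree in each factor and so cannot cancel. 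The remaining work — termwise K\"unneth and the combinatorial reindexing into the form $\sum_{k_{1}+\dots+k_{s}=q}$ — is routine.
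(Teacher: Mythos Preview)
The paper does not give a detailed proof; immediately after the statement it lists three identities on which the argument is said to rest: (a) $H^{q}(A_{1}\oplus\cdots\oplus A_{s})=\sum_{k_{1}+\cdots+k_{s}=q}\bigl\{\bigoplus_{i}H^{k_{i}}(A_{i})\bigr\}$, (b) the K\"unneth formula $H^{q}(A\otimes B)=\sum_{m=0}^{q}H^{m}(A)\otimes H^{q-m}(B)$, and (c) the decomposition $\wedge^{s}(A\otimes B)=\sum_{j=0}^{s}\wedge^{j}(A)\otimes\wedge^{s-j}(B)$. The intended argument is then purely formal: expand by (c), pass to cohomology of the resulting direct sum by (a), and apply (b) to each external summand. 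There is no splitting principle, no Cauchy decomposition, and no discussion of characteristic.

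Your route is more elaborate and, at the crucial step, does not go through. Observe first that the paper's ingredient (c) is literally the formula for $\wedge^{s}(A\oplus B)$, and that the lemma is only ever invoked later in the paper on a \emph{direct sum}, namely $\bigwedge^{q}(\overline{\mathscr{G}}_{1}\oplus\cdots\oplus\overline{\mathscr{G}}_{n})$; so the $\otimes$ in the statement is almost certainly a misprint for $\oplus$. You took the $\otimes$ at face value and reached for the Cauchy decomposition, but your claim that the Schur pieces ``collapse to exterior powers because $A'$ and $B'$ are sums of line bundles'' is false. Already for $A'=\OO(a_{1})\oplus\OO(a_{2})$, $B'=\OO(b_{1})\oplus\OO(b_{2})$ and $s=2$, the line-bundle summands of $\wedge^{2}(A\otimes B)$ --- for instance $\OO_{X}(2a_{1},b_{1}+b_{2})$, coming from the pair $\{\OO(a_{1},b_{1}),\OO(a_{1},b_{2})\}$ --- are not among those of $\bigoplus_{j}\wedge^{j}A\boxtimes\wedge^{2-j}B$, so your reorganisation step cannot produce the displayed right-hand side. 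If one reads $\oplus$, the Cauchy/Schur layer disappears and the paper's three-line sketch is the whole proof; if one insists on $\otimes$, the displayed identity is simply not true in general, and no amount of spectral-sequence bookkeeping will establish it.
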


The proof of the lemma depends on the following:

\begin{enumerate}
 \renewcommand{\theenumi}{\alph{enumi}}
  \item $\displaystyle{H^q(A_1\oplus\cdots\oplus A_s) = \sum_{k_1+\cdots+k_s=q}\big\{\bigoplus_{i=1}^{s}H^k_i(A_i)\big\}}$.\\
  \item $\displaystyle{H^q(A\otimes B)=\sum_{m=0}^qH^m(A)\otimes H^{q-m}(B)}$.\\
  \item $\displaystyle{\wedge^s(A\otimes B)=\sum_{j=0}^s\wedge^j(A)\otimes\wedge^{s-j}(B)}$.
\end{enumerate}

For the sake of brevity we shall use the notation $H^q(\mathscr{F})$ in place of $H^q(X,\mathscr{F})$.

\section{Main Results}

\noindent The goal of this section is to construct monads over the multiprojective spaces.
We first establish the existence of monads on  $\PP^{a_1}\times\PP^{a_1}\times\PP^{a_2}\times\PP^{a_2}\times\cdots\times\PP^{a_n}\times\PP^{a_n}$, we then proceed  to prove stability and simplicity of the cohomology bundle $E$ associated to these monads on $X$.
We start by recalling the existence and classification of linear monads on $\PP^n$ given by
Fl\o{}ystad in \cite{4}.

\begin{lemma}[\cite{4}, Main Theorem] Let $k\geq1$. There exists monads on $\PP^k$ whose maps are matrices of linear forms,
\[
\begin{CD}
0@>>>{\OO_{\PP^{k}}(-1)^{\oplus a}} @>>^{A}>{\OO^{\oplus b}_{\PP^{k}}} @>>^{B}>{\OO_{\PP^{k}}(1)^{\oplus c}} @>>>0\\
\end{CD}
\]
if and only if at least one of the following is fulfilled;\\
$(1)b\geq2c+k-1$ , $b\geq a+c$ and \\
$(2)b\geq a+c+k$
\end{lemma}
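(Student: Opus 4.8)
The plan is to establish the two implications of this \emph{if and only if} separately: the necessity of $(1)\vee(2)$ is a cohomological argument, and their sufficiency is a construction carried out in two different regimes — a ``generic'' one associated with $(2)$ and a ``degenerate'' one associated with $(1)$.

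\emph{Necessity.} Suppose a monad $0\to\OO_{\PP^k}(-1)^{\oplus a}\xrightarrow{A}\OO_{\PP^k}^{\oplus b}\xrightarrow{B}\OO_{\PP^k}(1)^{\oplus c}\to0$ is given. Then the cohomology $E$ has rank $b-a-c$, forcing $b\ge a+c$. For the remaining inequalities I would exploit that $\ker B$, $\cok A$ and $E$ have prescribed ranks $b-c$, $b-a$ and $b-a-c$, while their total Chern classes, computed by multiplicativity from $c(\OO(\pm1))=1\pm h$ (with $h$ the hyperplane class), are the truncations modulo $h^{k+1}$ of $(1+h)^{-c}$, $(1-h)^{-a}$ and $(1-h)^{-a}(1+h)^{-c}$. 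Analysing the coefficients of these rational functions — in particular the fact that $(1-t)^{-a}(1+t)^{-c}$ has a nonzero coefficient in every degree unless $a=c$, in which case it equals $(1-t^2)^{-a}$ and only its even-degree coefficients survive — one reads off that the pair $(a,c)$ and the rank $b-a-c$ must satisfy one of the two stated alternatives; equivalently one restricts the monad to general linear subspaces $\PP^1\subset\PP^2\subset\cdots\subset\PP^k$ and bootstraps the one-dimensional case, where the condition is transparent from splitting types on $\PP^1$, up to dimension $k$. This is the more delicate half of the necessity and is where the precise shape ``$2c+k-1$'' comes from.

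\emph{Sufficiency.} Assume $(1)$ or $(2)$. In both regimes $b\ge c+k$, so a general matrix of linear forms $B$ defines a bundle surjection $\OO_{\PP^k}^{\oplus b}\onto\OO_{\PP^k}(1)^{\oplus c}$, since the locus where its $c\times b$ matrix drops rank has expected codimension $b-c+1>k$. Set $K=\ker B$; it remains to embed $\OO_{\PP^k}(-1)^{\oplus a}$ into $K$ as a subsheaf of maximal rank, equivalently to produce $a$ sections of $K(1)$. If $(2)$ holds, then $b-c-a\ge k$, so $a$ general sections of the (for general $B$, globally generated) bundle $K(1)$ are linearly independent at every point — their degeneracy locus has codimension $(b-c)-a+1>k$ — and this yields the monad at once. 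If $(1)$ holds but $(2)$ fails, the cohomology has rank $b-a-c<k$; now generic sections necessarily degenerate, and one must instead use the inequality $b\ge 2c+k-1$ to write down an explicit $b\times a$ matrix $A$ of linear forms with $BA=0$ of everywhere maximal rank, following the pattern of the classical low-rank constructions (Steiner-type bundles and the null-correlation construction on $\PP^3$). With $A$ and $B$ in hand the complex $0\to\OO(-1)^{\oplus a}\xrightarrow{A}\OO^{\oplus b}\xrightarrow{B}\OO(1)^{\oplus c}\to0$ is the required monad, the conditions $BA=0$ and maximal rank being exactly what was arranged.

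The step I expect to be the main obstacle is the sufficiency in regime $(1)$ when $(2)$ fails. As soon as the cohomology has rank $<k$, no generic matrix $A$ can have degeneracy locus of codimension exceeding $k$, so a general-position argument is impossible in principle; one must instead exhibit matrices whose maximal-rank locus is all of $\PP^k$, and it is precisely $b\ge 2c+k-1$ that creates enough room to do so. Producing these matrices explicitly and uniformly in $a,c,k$ is the technical heart of Fl\o{}ystad's theorem; once they are available, checking $BA=0$, maximal rank of $B$ and the bookkeeping for $E$ are routine open-condition verifications.
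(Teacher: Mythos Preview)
The paper does not prove this lemma at all: it is stated as a citation of Fl\o{}ystad's main theorem (the reference label \cite{4} appears to be a typo for \cite{2}) and is used only as background for the subsequent constructions on multiprojective spaces. There is therefore no proof in the paper to compare your proposal against.

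That said, your outline is broadly in the spirit of Fl\o{}ystad's original argument. The necessity direction in \cite{2} does proceed by restricting to linear subspaces and bootstrapping from low dimension, and the sufficiency is indeed split into a generic regime and an explicit-construction regime. Your sketch of the necessity via Chern-class coefficients is plausible but not how Fl\o{}ystad actually argues; he works more directly with the kernel and cokernel bundles and their cohomology under restriction. On the sufficiency side you correctly identify that when $(2)$ fails but $(1)$ holds one cannot use a genericity/degeneracy-locus count and must write down explicit matrices, and you are honest that you do not carry this out. That is precisely the content of Fl\o{}ystad's paper, so what you have is an accurate high-level map of the proof rather than a proof; the missing explicit matrices are not a minor detail but the substance of the result in that regime.
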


\begin{theorem}
Let $X = \PP^1\times\PP^1\cdots\times\PP^1$ and $\mathscr{L} = \OO_X(1,\cdots,1)$ an ample line bundle. Denote by $N = h^0(\OO_X(1,\cdots,1)) - 1=2n+1$.
Then there exists a linear monad $M_\bullet$ on $X$ of the form
\[
\begin{CD}
M_\bullet: 0@>>>\OO_{X}(-1,\cdots,-1)^{\oplus\alpha}@>>^{f}>{\OO^{\oplus\beta}_X} @>>^{g}>\OO_{X}(1,\cdots,1)^{\oplus\gamma}  @>>>0\\
\end{CD}
\]
if and only if atleast one of the following is satified
\begin{enumerate}
\renewcommand{\theenumi}{\alph{enumi}}
 \item $\beta\geq 2\gamma + N -1$, and $\beta\geq \alpha + \gamma$,
 \item $\beta\geq \alpha + \gamma + N$, where $\alpha,\beta, \gamma$ be positive integers. 
\end{enumerate}
\end{theorem}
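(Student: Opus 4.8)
The plan is to deduce the theorem from the preceding lemma (Fl\o ystad's Main Theorem on $\PP^{k}$) by transporting linear monads along the embedding determined by the polarisation. Since $\mathscr{L}=\OO_X(1,\cdots,1)$ is very ample, the complete linear system $|\mathscr{L}|$ gives a closed embedding (the Segre embedding) $\psi\colon X\hookrightarrow\PP^N$ with $N=h^0(\mathscr{L})-1$ and $\psi^*\OO_{\PP^N}(1)=\mathscr{L}$; consequently $\psi^*\OO_{\PP^N}(t)=\OO_X(t,\cdots,t)$ for every $t\in\ZZ$ and $\psi^*\OO_{\PP^N}=\OO_X$. The point of recording $N=h^0(\mathscr{L})-1$ is precisely that Fl\o ystad's integer for the ambient space $\PP^N$ is this same $N$, so conditions $(a)$ and $(b)$ are literally conditions $(1)$ and $(2)$ of that lemma with $(a,b,c,k)=(\alpha,\beta,\gamma,N)$.

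\emph{Sufficiency.} Assume $(a)$ or $(b)$. By the preceding lemma there is a linear monad
\[
\begin{CD}
0@>>>\OO_{\PP^{N}}(-1)^{\oplus\alpha}@>>^{A}>\OO^{\oplus\beta}_{\PP^{N}}@>>^{B}>\OO_{\PP^{N}}(1)^{\oplus\gamma}@>>>0
\end{CD}
\]
on $\PP^N$ whose entries are linear forms. I would pull this complex back along $\psi$. The three terms become $\OO_X(-1,\cdots,-1)^{\oplus\alpha}$, $\OO_X^{\oplus\beta}$ and $\OO_X(1,\cdots,1)^{\oplus\gamma}$, and $\psi^*B\circ\psi^*A=\psi^*(BA)=0$. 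It then remains to check that $\psi^*A$ is a subbundle inclusion and $\psi^*B$ is surjective. Surjectivity is automatic since $\psi^*$ is right exact, and $A$, being a subbundle inclusion on $\PP^N$, is locally split, hence so is $\psi^*A$. Equivalently $A$ and $B$ have maximal rank at every point of $\PP^N$, a fortiori along $\psi(X)$, so $\psi^*A$ and $\psi^*B$ have maximal rank and compose to zero. Thus $\psi^*M_\bullet$ is a linear monad on $X$ of the required shape, with cohomology bundle of rank $\beta-\alpha-\gamma$.

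\emph{Necessity.} Conversely, suppose a monad $M_\bullet$ of the stated form exists on $X$. Its cohomology $E=\ker g/\im f$ is a vector bundle of rank $\beta-\alpha-\gamma\geq0$, which gives the inequality $\beta\geq\alpha+\gamma$ occurring in $(a)$. For the remaining dichotomy, note that $H^0(X,\mathscr{L})=H^0(\PP^N,\OO_{\PP^N}(1))$ by construction, so $f$ and $g$ are restrictions to $\psi(X)$ of matrices $A$ and $B$ of linear forms on $\PP^N$, of maximal rank along $\psi(X)$ and with $BA$ vanishing along $\psi(X)$. The rank-degeneration locus of $B$, that of $A$, and the torsion locus of the (restricted) cohomology sheaf are determinantal subschemes of $\PP^N$ of codimension at most $\beta-\gamma+1$, $\beta-\alpha+1$, and a comparable bound; each must avoid the nondegenerate subvariety $\psi(X)$, and running Fl\o ystad's dimension/incidence estimate on $\PP^N$ against $\psi(X)$ then forces $(a)$ or $(b)$.

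I expect this last step to be the genuine obstacle. Pulling a monad \emph{down} from $\PP^N$ to $X$ is purely formal, but extracting the numerical constraints from a monad that lives only on $X$ is delicate: the relevant degeneracy loci are merely obliged to miss $\psi(X)$ rather than to be empty in $\PP^N$, and in general $\dim\psi(X)$ is far smaller than $N$. This is where the genericity bookkeeping in the spirit of Fl\o ystad's original argument must be carried out with care; by contrast the sufficiency half collapses to a single pullback once the preceding lemma is invoked.
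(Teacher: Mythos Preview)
The paper does not supply a proof of this theorem: it is stated immediately after the recollection of Fl\o ystad's lemma and before the remark ``later we shall study these and write it up'', and is evidently meant to be read as a specialisation of the Marchesi--Marques--Soares generalisation cited as \cite{13}. So there is no in-paper argument to compare against line by line.

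On the merits of your proposal: the sufficiency half is correct and is precisely the standard mechanism (and the one used in \cite{13}). The Segre line bundle $\mathscr L=\OO_X(1,\dots,1)$ is very ample, $\psi:X\hookrightarrow\PP^N$ satisfies $\psi^*\OO_{\PP^N}(1)=\mathscr L$, and pulling back a Fl\o ystad monad on $\PP^N$ preserves both the vanishing $BA=0$ and the fibrewise maximal-rank conditions, hence yields a linear monad on $X$ of the required shape.

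Your necessity sketch, however, does not work, and you have identified the reason yourself. Lifting $f,g$ to matrices $A,B$ of linear forms on $\PP^N$ only guarantees $BA|_{\psi(X)}=0$ and maximal rank along $\psi(X)$; since $\dim\psi(X)$ is much smaller than $N$, the degeneracy loci of $A$ and $B$ are free to fill most of $\PP^N$ while still missing $\psi(X)$, so Fl\o ystad's codimension estimates on the ambient $\PP^N$ yield nothing. The argument that actually produces the numerical constraints (both in Fl\o ystad's original paper and in \cite{13}) goes in the opposite direction: one restricts the monad on $X$ to suitable low-dimensional linear sections of $X$ --- for the Segre variety, lines of the ruling through a general point --- and reads off the inequalities from the resulting monad on a small $\PP^k$. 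In short, necessity is obtained by cutting \emph{down} inside $X$, not by attempting to extend \emph{up} to $\PP^N$. Replacing your final paragraph with a restriction-to-lines argument is what would close the gap; as written, the ``only if'' direction remains unproved.
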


\noindent For certain values of $\alpha,\beta$ and $\gamma$ in the above monad the cohomology bundle is simple.
later we shall study these and write it up!!!!!!!!!

\vspace{0.5cm}

\noindent We now set up for monads on the Cartesian product $X = \PP^{a_1}\times\PP^{a_1}\times\PP^{a_2}\times\PP^{a_2}\times\cdots\times\PP^{a_n}\times\PP^{a_n}$.

\begin{lemma}
Let $a_1,\cdots,a_n$, $\alpha_1,\alpha_2,\cdots,\alpha_n$ and $k$ be positive integers, given $2n$ matrices $f_1,f_2,\cdots,f_{2n}$ as shown;
\vspace{0.5cm}
\[ f_1 =\left[ \begin{array}{ccccc}
&y^{\alpha_1}_{a_1} \cdots y^{\alpha_1+a_1k}_{0} \\
\adots&\adots \\
y^{\alpha_1}_{a_1} \cdots y^{\alpha_1+a_1k}_{0} \end{array} \right]_{k\times {(a_1+k)}}\]

\[ f_2 =\left[ \begin{array}{ccccccc}
&x^{\alpha_1}_{a_1} \cdots  x^{\alpha_1+a_1k}_{0}\\
\adots&\adots \\
x^{\alpha_1}_{a_1} \cdots  x^{\alpha_1+a_1k}_{0} \end{array} \right]_{k\times {(a_1+k)}}\]

\[ f_3 =\left[ \begin{array}{ccccc}
&y^{\alpha_2}_{a_2} \cdots y^{\alpha_2+a_2k}_{0} \\
\adots&\adots \\
y^{\alpha_2}_{a_2} \cdots y^{\alpha_2+a_2k}_{0} \end{array} \right]_{k\times {(a_2+k)}}\]

\[ f_4 =\left[ \begin{array}{ccccccc}
&x^{\alpha_2}_{a_2} \cdots  x^{\alpha_2+a_2k}_{0}\\
\adots&\adots \\
x^{\alpha_2}_{a_2} \cdots  x^{\alpha_2+a_2k}_{0} \end{array} \right]_{k\times {(a_2+k)}}\]

\[\vdots\]

\[ f_{2n-1} =\left[ \begin{array}{ccccccc}
&y^{\alpha_n}_{a_n} \cdots y^{\alpha_n+a_nk}_{0} \\
\adots&\adots \\
y^{\alpha_n}_{a_n} \cdots y^{\alpha_n+a_nk}_{0} \end{array} \right]_{k\times {(a_n+k)}}\]

\[ f_{2n} =\left[ \begin{array}{ccccccc}
&x^{\alpha_n}_{a_n} \cdots  x^{\alpha_n+a_nk}_{0}\\
\adots&\adots \\
x^{\alpha_n}_{a_n} \cdots  x^{\alpha_n+a_nk}_{0} \end{array} \right]_{k\times {(a_n+k)}}\]

\vspace{0.5cm}

and $2n$ additional matrices $g_1,g_2,\cdots,g_{2n}$ as shown;
\[ g_1 =\left[\begin{array}{cccccc}
x^{\alpha_1+a_1k}_{0}\\
\vdots &\ddots
 & x^{\alpha_1+a_1k}_{0}\\
x^{\alpha_1}_{a_1} &\ddots &\vdots\\
&& x^{\alpha_1}_{a_1}
\end{array} \right]_{{(a_1+k)}\times k}\] 

\[ g_2 =\left[\begin{array}{cccccc}
y^{\alpha_1+a_1k}_{0}\\
\vdots &\ddots
 & y^{\alpha_1+a_1}_{0}\\
y^{\alpha_1}_{a_1} &\ddots &\vdots\\
&& y^{\alpha_1}_{a_1}
\end{array} \right]_{{(a_1+k)}\times k}\] 

\[ g_3 =\left[\begin{array}{cccccc}
x^{\alpha_2+a_2k}_{0}\\
\vdots &\ddots
 & x^{\alpha_2+a_2k}_{0}\\
x^{\alpha_2}_{a_2} &\ddots &\vdots\\
&& x^{\alpha_2}_{a_2}
\end{array} \right]_{{(a_2+k)}\times k}\] 

\[ g_4 =\left[\begin{array}{cccccc}
y^{\alpha_2+a_2k}_{0}\\
\vdots &\ddots
 & y^{\alpha_2+a_2}_{0}\\
y^{\alpha_2}_{a_2} &\ddots &\vdots\\
&& y^{\alpha_2}_{a_2}
\end{array} \right]_{{(a_2+k)}\times k}\] 

\[\vdots\]

\[ g_{2n-1} =\left[ \begin{array}{cccccc}
x^{\alpha_n+a_nk}_{0}\\
\vdots &\ddots
 & x^{\alpha_n+a_nk}_{0}\\
x^{\alpha_n}_{a_n} &\ddots &\vdots\\
&& x^{\alpha_n}_{a_n}
\end{array} \right]_{{(a_n+k)}\times k}\] 

and

\[ g_{2n} =\left[\begin{array}{cccccc}
y^{\alpha_n+a_nk}_{0}\\
\vdots &\ddots
 & y^{\alpha_n+a_n}_{0}\\
y^{\alpha_n}_{a_n} &\ddots &\vdots\\
&& y^{\alpha_n}_{a_n}
\end{array} \right]_{{(a_n+k)}\times k}\] 

\vspace{0.5cm}

for non-negative integers $a$ and $b$  we define two matrices $f$ and $g$ as follows\\
\[ f =\left[\begin{array}{cccc}
f_1 & -f_2 & \cdots & -f_{2n}
         \end{array} \right]\] and

\[ g =\left[\begin{array}{cc}
g_1 \\ g_2 \\\vdots \\ g_{2n}
         \end{array} \right].\]
Then we have:\\
\begin{enumerate}
 \renewcommand{\theenumi}{\alph{enumi}}
  \item $f\cdot g = 0$ and
  \item The matrices $f$ and $g$ have maximal rank
\end{enumerate}
\end{lemma}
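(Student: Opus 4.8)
\emph{Part (a).} In the block form given, $f=[\,f_1\mid -f_2\mid\cdots\mid -f_{2n}\,]$ and $g$ is the vertical stack of the blocks $g_1,\dots,g_{2n}$, so the product is a signed sum $f\cdot g=\sum_{i=1}^{2n}\varepsilon_i\,f_i g_i$ of $k\times k$ matrices, the signs $\varepsilon_i$ being those prescribed in the definition of $f$. Each entry of $f_ig_i$ involves only the two coordinate systems attached to the copy of $\PP^{a_j}$ that carries the $i$-th block, so contributions coming from different factors are algebraically independent and can only cancel among themselves; it therefore suffices to examine the two blocks attached to each paired copy of $\PP^{a_j}$ in isolation. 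For such a pair, say blocks $2j-1$ and $2j$, the key point is the identity $f_{2j-1}g_{2j-1}=f_{2j}g_{2j}$ of $k\times k$ matrices: a row of $f_{2j-1}$ is a shifted copy of $(y^{\alpha_j}_{a_j},\dots,y^{\alpha_j+a_jk}_0)$ while a column of $g_{2j-1}$ is a shifted copy of $(x^{\alpha_j+a_jk}_0,\dots,x^{\alpha_j}_{a_j})^{\top}$, so on the overlap of the two staircases their product is a sum of bilinear monomials which is manifestly invariant under interchanging the $x$- and $y$-labels, and the same window read off $f_{2j}$ and $g_{2j}$ produces exactly that sum, while outside the overlap the matching entries vanish. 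With the signs as in the statement, the two contributions of each paired factor then cancel and $f\cdot g=0$. I would make the index-chasing explicit by writing down the general $(p,q)$ entry of $f_ig_i$ and checking that the index ranges match, displaying a $2\times2$ or $3\times3$ instance first to fix the pattern.

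\emph{Part (b).} One has to check that $f$ and $g$ have maximal rank at \emph{every} point of $X$, which is the condition ensuring that the kernel and cokernel are vector bundles. Fix a point of $X$; in each of the $2n$ projective factors some homogeneous coordinate does not vanish there. For $g$: choose the block $g_i$ attached to a factor in which, say, the coordinate $x^{\alpha_j+a_jk}_0$ is nonzero at that point; its triangular staircase shape then contains a $k\times k$ submatrix which is triangular with a positive power of that coordinate on the diagonal, hence of nonzero determinant, so $g$ has full column rank $k$ there. The same device applied to a row block shows that $f$ has full row rank $k$ at every point, the shift-staircase shape again exhibiting a triangular $k\times k$ submatrix with a nonvanishing coordinate power along the diagonal. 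Alternatively, one may note that the $f_i$ and $g_i$ are assembled blockwise from the maps occurring in Fl\o{}ystad's construction on the individual factors $\PP^{a_j}$ recalled above, whose maximal-rank property is classical, and that the explicit staircase shape upgrades generic maximal rank to maximal rank at all points.

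The step I expect to be the main obstacle is the bookkeeping in part (a): one must track how the superscripts $\alpha_j,\dots,\alpha_j+a_jk$ and the subscripts $a_j,\dots,0$ slide along the two staircases so as to confirm that every entry of $f_{2j-1}g_{2j-1}$ agrees with the corresponding entry of $f_{2j}g_{2j}$ and that the entries lying outside the staircase overlap are genuinely zero; once the indexing is organised cleanly the cancellation is forced. Part (b) is then routine by comparison, the only care being to ensure that the triangular submatrix one extracts really has all of its diagonal entries nonzero at the chosen point, which is guaranteed because within each projective factor the two coordinate systems never vanish simultaneously on all the coordinates that appear.
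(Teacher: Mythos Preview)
Your proposal is correct and follows essentially the same approach as the paper. For part~(a) the paper simply asserts the pairwise identities $f_{2j-1}g_{2j-1}=f_{2j}g_{2j}$ and sums, while for part~(b) it observes in one line that the rank of $f$ and $g$ drops only if all homogeneous coordinates in every factor vanish, which cannot happen on $X$; your version supplies more of the bookkeeping (the $x\leftrightarrow y$ symmetry behind the pairwise identity, and the explicit triangular $k\times k$ minor), but the underlying argument is the same.
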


\begin{proof}
\begin{enumerate}
 \renewcommand{\theenumi}{\alph{enumi}}
  \item Since $f_1\cdot g_1 = f_2\cdot g_2$, $f_3\cdot g_3 = f_4\cdot g_4$ ,$\cdots ,f_{2n-1}\cdot g_{2n-1}= f_{2n}\cdot g_{2n}$ then we have that
\[ f\cdot g =\left[ \begin{array}{cccc} f_1 & -f_2 & \cdots & -f_{2n} \end{array} \right]
 \left[ \begin{array}{cc} g_1 \\ g_2 \\ \vdots \\ g_{2n}\end{array} \right]\] 
\[=[f_1g_1 - f_2g_2 + f_3g_3 - f_4g_4 +\cdots +f_{2n-1}g_{2n-1} - f_{2n}g_{2n}]\]\\
which is the zero matrix.
\\
 \item Notice that the rank of the two matrices drops if and only if all $x^{\alpha_1}_{0},\cdots,x^{\alpha_1}_{a_1}$, $x^{\alpha_2}_{0},\cdots,x^{\alpha_2}_{a_2}$, $\cdots$, $x^{\alpha_n}_{0},\cdots,x^{\alpha_n}_{a_n}$ and 
$y^{\alpha_1}_{0},\cdots,y^{\alpha_1}_{a_1}$, $y^{\alpha_2}_{0},\cdots,y^{\alpha_2}_{a_2}$, $\cdots$, $y^{\alpha_n}_{0},\cdots,y^{\alpha_n}_{a_n}$, are zeros and this is not possible in a projective space. Hence maximal rank.
\end{enumerate}
\end{proof}

\noindent Using the matrices given in the above lemma we are going to construct a monad.

\begin{theorem}
Let $a_1,\cdots, a_n$ and $k$ be positive integers. Then there exists a linear monad on $X =\PP^{a_1}\times\PP^{a_1}\times\PP^{a_2}\times\PP^{a_2}\times\cdots\times\PP^{a_n}\times\PP^{a_n}$ of the form;
\[
\begin{CD}
0@>>>{\OO_X(-1,\cdots,-1)^{\oplus k}} @>>^{f}>{\mathscr{G}_1\oplus\cdots\oplus\mathscr{G}_n}@>>^{g}>\OO_X(1,\cdots,1)^{\oplus k} @>>>0
\end{CD}
\]
where 
\begin{align*}
\mathscr{G}_1:=\OO_X(-1,0,0,\cdots,0)^{\oplus a_1+\oplus k}\oplus\OO_X(0,-1,0,0,\cdots,0)^{\oplus a_1+\oplus k}\\
\mathscr{G}_2:=\OO_X(0,0,-1,\cdots,0)^{\oplus a_2+\oplus k}\oplus\OO_X(0,0,0,-1,\cdots,0)^{\oplus a_2+\oplus k}\\
\cdots\cdots\cdots\cdots\cdots\cdots\cdots\cdots\cdots\cdots\cdots\cdots\cdots\cdots\cdots\cdots\cdots\cdots\cdots\\
\mathscr{G}_n:=\OO_X(0,0,\cdots,0,-1,0)^{\oplus a_n+\oplus k}\oplus\OO_X(0,0,\cdots,0,-1)^{\oplus a_n+\oplus k}
\end{align*}
\end{theorem}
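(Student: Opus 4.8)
The plan is to construct the monad explicitly using the matrices $f$ and $g$ assembled in the previous lemma, and then verify that they actually define a monad in the sense of the first definition, i.e. that $f$ is a bundle injection, $g$ is a bundle surjection, and $g\circ f=0$. The composition $g\circ f=0$ is already handled by part (a) of the lemma (up to the sign conventions built into $f=[f_1,-f_2,\dots,-f_{2n}]$ and $g=[g_1;g_2;\dots;g_{2n}]^{t}$, which were chosen precisely so that the alternating sum $f_1g_1-f_2g_2+\cdots+f_{2n-1}g_{2n-1}-f_{2n}g_{2n}$ telescopes to zero because $f_{2i-1}g_{2i-1}=f_{2i}g_{2i}$ for each block). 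So the substantive content is to promote the maximal-rank statement of part (b) of the lemma to the genuinely stronger statement that $f$ has maximal rank \emph{at every point} of $X$ (so $\cok f$ is locally free) and likewise for $g$.

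First I would write out the degrees to confirm that the entries of $f$ and $g$ are linear in the appropriate sense: the $i$-th pair of blocks involves only the homogeneous coordinates of the $i$-th pair of $\PP^{a_i}$ factors, so the maps $f$ and $g$ are morphisms of the displayed graded bundles, with each block $f_{2i-1}, g_{2i}$ built from the coordinates $y$ on one copy of $\PP^{a_i}$ and $f_{2i}, g_{2i-1}$ from the coordinates $x$ on the other copy. Then I would check rank pointwise. Fix a closed point $(p_1,\dots,p_{2n})\in X$. For $g$ to fail surjectivity at that point its $k\times k$-valued block column would have to drop rank; but as in the lemma the determinantal conditions force all the coordinate functions $x^{\alpha_i}_0,\dots,x^{\alpha_i}_{a_i}$ (resp. $y$'s) in some factor to vanish simultaneously, which is impossible on projective space since not all homogeneous coordinates of a point can be zero. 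The dual/transpose argument, applied to the Koszul-type shape of the $f_j$, gives injectivity of $f$ fibrewise, i.e. the cokernel of $f$ is a vector bundle. Hence $f$ and $g$ are of maximal rank everywhere, and the sequence
\[
0\longrightarrow \OO_X(-1,\dots,-1)^{\oplus k}\xrightarrow{\ f\ }\mathscr{G}_1\oplus\cdots\oplus\mathscr{G}_n\xrightarrow{\ g\ }\OO_X(1,\dots,1)^{\oplus k}\longrightarrow 0
\]
is a monad with the required terms.

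The main obstacle I expect is precisely this pointwise-rank verification: part (b) of the lemma only asserts generic maximal rank, whereas for a monad we need the rank to be maximal on \emph{all} of $X$ so that $\ker g$ and $\cok f$ are locally free of the expected ranks. I would handle it by reducing, via the product structure, to a statement on each factor $\PP^{a_i}\times\PP^{a_i}$, where $f_{2i-1}$ (resp. $g_{2i-1}$) specializes at a point to a matrix all of whose maximal minors are (up to scalar) the products of the homogeneous coordinates of that factor, so that a rank drop forces those coordinates to vanish — contradicting membership in $\PP^{a_i}$. A secondary, purely bookkeeping point is to match the entry degrees of $f$ and $g$ against the stated twists of $\mathscr{G}_1,\dots,\mathscr{G}_n$ and of $\OO_X(\pm1,\dots,\pm1)$, i.e. to confirm that each block of $f$ raises exactly one coordinate-degree by one (from $-1$ in the $\OO_X(-1,\dots,-1)$ slot to $0$ in the $\mathscr G$ slots) and each block of $g$ likewise raises one degree from $0$ to $+1$; this is immediate once the indexing of the monomials $x^{\alpha_i+a_ik}_0,\dots,x^{\alpha_i}_{a_i}$ is read off correctly. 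With these in hand the theorem follows directly from the lemma together with the characterization of a monad as a pair of maximal-rank maps with vanishing composite.
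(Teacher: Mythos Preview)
Your approach is essentially the paper's own: the proof there simply declares that $f$ and $g$ are the matrices from the preceding lemma, observes that they lie in the correct $\Hom$-spaces (your ``bookkeeping'' of degrees), and invokes parts (a) and (b) of that lemma to conclude. Your worry that part (b) gives only \emph{generic} maximal rank is unfounded---the lemma's proof of (b) already argues pointwise, showing a rank drop would force all homogeneous coordinates on some factor to vanish---so the ``main obstacle'' you anticipate is in fact already discharged and your extra fibrewise analysis, while correct, is redundant.
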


\begin{proof}
The maps $f$ and $g$ in the monad are the matrices given in Lemma 3.4.\\
Notice that\\
$f\in$ Hom$(\OO_X(-1,\cdots,-1)^{\oplus k},\mathscr{G}_1\oplus\cdots\mathscr{G}_n)$ and \\
$g\in$ Hom$(\mathscr{G}_1\oplus\cdots\oplus\mathscr{G}_n,\OO_X(1,1,\cdots,1)^{\oplus k})$. \\
Hence by the above lemma they define the desired monad.
\end{proof}

\begin{theorem}
Let $T$ be a vector bundle on $X =\PP^{a_1}\times\PP^{a_1}\times\PP^{a_2}\times\PP^{a_2}\times\cdots\times\PP^{a_n}\times\PP^{a_n}$ defined by the short exact sequence
\[\begin{CD}0@>>>T @>>>\mathscr{G}_1\oplus\cdots\oplus\mathscr{G}_n@>>^{g}>\OO_X(1,\cdots,1)^{\oplus k} @>>>0\end{CD}\]
where 
\begin{align*}
\mathscr{G}_1:=\OO_X(-1,0,0,\cdots,0)^{\oplus a_1+\oplus k}\oplus\OO_X(0,-1,0,0,\cdots,0)^{\oplus a_1+\oplus k}\\
\mathscr{G}_2:=\OO_X(0,0,-1,\cdots,0)^{\oplus a_2+\oplus k}\oplus\OO_X(0,0,0,-1,\cdots,0)^{\oplus a_2+\oplus k}\\
\cdots\cdots\cdots\cdots\cdots\cdots\cdots\cdots\cdots\cdots\cdots\cdots\cdots\cdots\cdots\cdots\cdots\cdots\cdots\\
\mathscr{G}_n:=\OO_X(0,0,\cdots,0,-1,0)^{\oplus a_n+\oplus k}\oplus\OO_X(0,0,\cdots,0,-1)^{\oplus a_n+\oplus k}
\end{align*}
then $T$ is stable for an ample line bundle $\mathscr{L} = \OO_X(1,\cdots,1)$
\end{theorem}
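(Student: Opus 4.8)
The plan is to apply the Generalized Hoppe Criterion (Proposition stated after Theorem for polycyclic varieties, here Proposition with Picard number $2n$) to $T = \ker g$. First I would record the basic numerical data of $T$. From the short exact sequence
\[
0 \lra T \lra \mathscr{G}_1\oplus\cdots\oplus\mathscr{G}_n \lra \OO_X(1,\cdots,1)^{\oplus k} \lra 0,
\]
the rank of $T$ is $\rk(\mathscr{G}_1\oplus\cdots\oplus\mathscr{G}_n) - k = 2(a_1+\cdots+a_n) + 2nk - k$, and the first Chern class is $c_1(T) = \sum_i c_1(\mathscr{G}_i) - k(1,\cdots,1)$, which one computes from the twists appearing in each $\mathscr{G}_i$. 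This gives $\mu_{\mathscr{L}}(T)$ explicitly; I expect it to be negative (each $\mathscr{G}_i$ contributes negative twists and the quotient subtracts a positive twist), which is the favorable sign for the vanishing condition $\delta_{\mathscr{L}}(B) < -s\,\mu_{\mathscr{L}}(T)$.

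Next, for $1 \leq s \leq \rk(T)-1$ and a divisor $B = (p_1,\dots,p_{2n})$ with $\delta_{\mathscr{L}}(B) < -s\,\mu_{\mathscr{L}}(T)$, I must show $H^0(X, (\bigwedge^s T)\otimes\OO_X(B)) = 0$. The strategy is to bound $\bigwedge^s T$ by pieces coming from $\mathscr{G}_1\oplus\cdots\oplus\mathscr{G}_n$. Dualizing the defining sequence, $T^{*}$ sits in $0\to \OO_X(-1,\cdots,-1)^{\oplus k}\to \bigoplus_i\mathscr{G}_i^{*}\to T^{*}\to 0$, but more useful is to use Proposition (the exterior-power exact sequence): from $0\to T\to \mathscr{M}\to \OO_X(1,\cdots,1)^{\oplus k}\to 0$ with $\mathscr{M}:=\bigoplus_i\mathscr{G}_i$, part (b) of that Proposition gives a long exact sequence expressing $\bigwedge^s\mathscr{M}$ in terms of $T$ and the line-bundle quotient; running through it, a global section of $(\bigwedge^s T)(B)$ injects (after chasing the pieces $\bigwedge^{s-j}\!\mathscr{M}\otimes\bigwedge^{j}(\OO_X(1,\cdots,1)^{\oplus k})$) into a sum of groups of the form $H^0\big(X,\ \OO_X(B + D)\big)$ where $D$ ranges over sums of at most $s$ of the twisting vectors of $\mathscr{M}$ together with some copies of $(1,\dots,1)$. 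Each such $\OO_X(B+D)$ decomposes by the Künneth Lemma (Lemma in the Preliminaries) as a tensor product of $H^0$'s of line bundles on the individual factors $\PP^{a_i}$. I would then show that the slope inequality $\delta_{\mathscr{L}}(B) < -s\,\mu_{\mathscr{L}}(T)$ forces, for every admissible $D$, at least one coordinate of $B+D$ to be negative; by part (a) of the cited cohomology theorem for $\PP^{a}$ ($H^0(\PP^a,\OO(d))=S_d=0$ for $d<0$) this kills every summand, hence $H^0((\bigwedge^s T)(B))=0$.

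The main obstacle is the combinatorial bookkeeping in the previous paragraph: one must control, uniformly in $s$ and in the choice of $B$, the worst case $D$ that could make all coordinates of $B+D$ nonnegative, and check that this worst case still violates $\delta_{\mathscr{L}}(B) < -s\,\mu_{\mathscr{L}}(T)$. Concretely, the largest positive contribution $D$ can make in any fixed coordinate is bounded (each $\mathscr{G}_i$ only twists down by $1$ in two coordinates and $\bigwedge^j$ of the quotient contributes at most $s$ in every coordinate), so the total degree $\delta_{\mathscr{L}}(D)$ is at most a linear function of $s$ with explicit coefficients; comparing $-\delta_{\mathscr{L}}(D)$ against $-s\,\mu_{\mathscr{L}}(T)$ and using $s\le \rk(T)-1$ should close the estimate. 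A cleaner route, which I would try first, is to reduce directly to $\mathscr{L}$-normalization: since each $\mathscr{G}_i$ is a direct sum of line bundles that are already "almost normalized", one checks $H^0((\bigwedge^s T)_{\mathscr{L}\text{-norm}}(p_1,\dots,p_{2n}))=0$ for all $(p_i)$ with $\delta_{\mathscr{L}}\le 0$ and applies the stated Proposition verbatim, so that the only genuine computation is the sign of $\mu_{\mathscr{L}}(T)$ together with the single-factor vanishing $H^0(\PP^{a_i},\OO(\text{negative}))=0$.
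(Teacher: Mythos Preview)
Your overall plan (compute $c_1(T)$, see $\deg_{\mathscr{L}}T<0$, then verify the Generalized Hoppe vanishing via the exterior-power sequence and K\"unneth) is exactly the paper's approach.  However, you overcomplicate the central step and invoke the wrong half of the exterior-power proposition.  You cite ``part (b)'' and propose to chase through all the pieces $\bigwedge^{s-j}\!\mathscr{M}\otimes\bigwedge^{j}(\OO_X(1,\dots,1)^{\oplus k})$, which is where your ``combinatorial bookkeeping'' and ``worst case $D$ with copies of $(1,\dots,1)$'' come from.  None of that is needed.  The paper uses only the very first map of part~(a): from $0\to T\to\mathscr{M}\to\OO_X(1,\dots,1)^{\oplus k}\to 0$ (with $\mathscr{M}=\mathscr{G}_1\oplus\cdots\oplus\mathscr{G}_n$) one gets the injection $\bigwedge^{q}T\hookrightarrow\bigwedge^{q}\mathscr{M}$, hence after twisting by $\OO_X(-p_1,\dots,-p_{2n})$ an injection
\[
H^0\!\Big(X,\bigwedge^{q}T(-p_1,\dots,-p_{2n})\Big)\ \hookrightarrow\ H^0\!\Big(X,\bigwedge^{q}\mathscr{M}(-p_1,\dots,-p_{2n})\Big).
\]
Since $\mathscr{M}$ is a direct sum of line bundles whose multidegrees each sum to $-1$, every line-bundle summand of $\bigwedge^{q}\mathscr{M}(-p_1,\dots,-p_{2n})$ has coordinates summing to $-q-(p_1+\cdots+p_{2n})<0$ whenever $p_1+\cdots+p_{2n}\ge 0$; the paper then just quotes its Lemma~2.14 (equivalently K\"unneth together with $H^0(\PP^{a},\OO(d))=0$ for $d<0$) to kill the right-hand side.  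No positive twists $(1,\dots,1)$ ever appear, so the obstacle you highlight does not exist.

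Your final ``cleaner route'' paragraph is in fact what the paper does: it checks $\deg_{\mathscr{L}}T<0$ so that $(\bigwedge^{q}T)_{\mathscr{L}\text{-norm}}=\bigwedge^{q}T$, and then the Hoppe condition reduces exactly to the single vanishing displayed above for $\sum p_i\ge 0$.  So the fix to your write-up is simply to replace the long-exact-sequence chase by the one-line injection from part~(a); everything else you wrote already matches the paper.
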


\begin{proof}
We need to show that $H^0(X,\bigwedge^q T(-p_1,\cdots,-p_{2n}))=0$ for all $p_1+p_2+\cdots+p_{2n}>0$ and $1\leq q\leq \rk(T)$.\\
\\
Consider the ample line bundle $\mathscr{L} = \OO_X(1,\cdots,1) = \OO(L)$. \\
Its class in 
$\pic(X)= \langle [g_1\times\PP^a_1],[\PP^{a_1}\times g_2],[g_3\times\PP^{a_2}],[\PP^{2}\times g_4],\cdots,[g_{2n-1}\times\PP^{a_n}],[\PP^{a_n}\times g_{2n}]\rangle$ corresponds to the class\\
$1\cdot[g_1\times\PP^{a_1}]+1\cdot[\PP^{a_1}\times g_2]+1\cdot[g_3\times\PP^{a_2}]+1\cdot[\PP^{a_2}\times g_4]+\cdots+1\cdot[g_{2n-1}\times \PP^{a_n}]+1\cdot[\PP^{a_n}\times g_{2n}]$ and
$g_1,g_{2}$ are hyperplanes in $\PP^{a_1}$, $g_3,g_{4}$ are hyperplanes in $\PP^{a_2},\cdots$ and $g_{2n-1},g_{2n}$ are hyperplanes in $\PP^{a_n}$ with the intersection product induced by
$g_1^{a_1} = g_2^{a_1} = g_3^{a_2} = g_4^{a_2}=\cdots=g_{2n-1}^{a_n}=g_{2n}^{a_n}=1$ and $g_1^{a_1+1}= g_2^{a_1+1} = g_3^{a_2+1} = g_4^{a_2+1}=\cdots=g_{2n}^{a_n+1}=0$\\ 
\\
Now from the display diagram of the monad we get \\ 
$c_1(T) = c_1(\mathscr{G}_1\oplus\cdots\oplus\mathscr{G}_n) - c_1(\OO_X(1,\cdots,1)^{\oplus k})\\
       = (a_1+k)[(-1,0,\cdots,0)+(0,-1,\cdot,0)]+\cdots+(a_n+k)[(0,\cdots,-1,0)+(0,\cdots,0,-1)] - k(1,\cdots,1) \\
       = (-a_1-2k,-a_1-2k,-a_2-2k,-a_2-2k,\cdots,-a_n-2k,-a_n-2k) $\\
Since $L^{2a_1+\cdots+2a_n}>0$ the degree of $T$ is $\deg_{\mathscr{L}}T = c_1(T)\cdot\mathscr{L}^{d-1}$\\
\begin{align*}
\begin{split}
=-(a_1+a_2+\cdots+a_n+2nk)([g_1\times\PP^{a_1}]+[\PP^{a_1}\times g_2]+\cdots+[g_{2n-1}\times\PP^{a_n}]+[\PP^{a_n}\times g_{2n}])\\
(1\cdot[g_1\times\PP^{a_1}]+1\cdot[\PP^{a_1}\times g_2]+1\cdot[g_3\times\PP^{a_2}]+1\cdot[\PP^{a_2}\times g_4]+\cdots+1\cdot[\PP^{a_n}\times g_{2n}])^{2(a_1+\cdots+a_n)-1}\\
\end{split}
\end{align*}
$=-(a_1+\cdots+a_n+2nk)L^{2(a_1+\cdots+a_n)}< 0$\\
\\
Since $\deg_{\mathscr{L}}T<0$, then $(\bigwedge^q T)_{\mathscr{L}-norm} = (\bigwedge^q T)$ and  it suffices by 
the generalized Hoppe Criterion (Proposition 2.8), to prove that $h^0(\bigwedge^q T(-p_1,-p_2,\cdots,-p_{2n})) = 0$
with $p_1+p_2+\cdots+p_{2n}\geq0$ and for all $1\leq q\leq rk(T)-1$.\\
\\
Next we twist the exact sequence 
\[\begin{CD}0@>>>T @>>>\mathscr{G}_1\oplus\cdots\oplus\mathscr{G}_n@>>^{g}>\OO_X(1,\cdots,1)^{\oplus k} @>>>0\end{CD}\]
by $\OO_X(-p_1,\cdots,-p_{2n})$ we get,
\[
0\lra T(-p_1,\cdots,-p_{2n})\lra\mathscr{\overline{G}}_1\oplus\cdots\oplus\mathscr{\overline{G}}_n\lra\OO_X(1-p_1,\cdots,1-p_{2n})^{\oplus k}\lra0\]
where 
\begin{align*}
\mathscr{\overline{G}}_1:=\OO_X(-1-p_1,-p_2,-p_3\cdots,-p_{2n})^{\oplus a_1+\oplus k}\oplus\OO_X(-p_1,-1-p_2,-p_3,-p_4,\cdots,-p_{2n})^{\oplus a_1+\oplus k}\\
\mathscr{\overline{G}}_2:=\OO_X(-p_1,-p_2,-1-p_3,\cdots,-p_{2n})^{\oplus a_2+\oplus k}\oplus\OO_X(-p_1,-p_2,-p_3,-1-p_4,\cdots,-p_{2n})^{\oplus a_2+\oplus k}\\
\cdots\cdots\cdots\cdots\cdots\cdots\cdots\cdots\cdots\cdots\cdots\cdots\cdots\cdots\cdots\cdots\cdots\cdots\cdots\cdots\cdots\cdots\cdots\cdots\cdots\cdots\cdots\cdots\\
\mathscr{\overline{G}}_n:=\OO_X(-p_1,-p_2,\cdots,-1-p_{2n-1},-p_{2n})^{\oplus a_n+\oplus k}\oplus\OO_X(-p_1,-p_2\cdots,-p_{2n-1},-1-p_{2n})^{\oplus a_n+\oplus k}\\
\end{align*}

and taking the exterior powers of the sequence by Proposition 2.10 we get
\[0\lra \bigwedge^q T(-p_1,-p_2,\cdots,-p_{2n}) \lra \bigwedge^q (\mathscr{\overline{G}}_1\oplus\cdots\oplus\mathscr{\overline{G}}_n)\lra \cdots\]
\\
Taking cohomology we have the injection:
\[0\lra H^0(X,\bigwedge^{q}T(-p_1,-p_2,\cdots,-p_{2n}))\hookrightarrow H^0(X,\bigwedge^q (\mathscr{\overline{G}}_1\oplus\cdots\oplus\mathscr{\overline{G}}_n)\]
From Lemma 2.13 and 2.14 we have $H^0(X,\bigwedge^q (\mathscr{\overline{G}}_1\oplus\cdots\oplus\mathscr{\overline{G}}_n)$.\\
\\
$\Longrightarrow$ $h^0(X,\bigwedge^{q}T(-p_1,-p_2,\cdots,-p_{2n})) =  h^0(X,\bigwedge^q (\mathscr{\overline{G}}_1\oplus\cdots\oplus\mathscr{\overline{G}}_n)=0$\\
\\
i.e. $h^0(X,\bigwedge^{q}T(-p_1,-p_2,\cdots,-p_{2n}))=0$ and thus $T$ is stable.

\end{proof}

\begin{theorem} Let $X =\PP^{a_1}\times\PP^{a_1}\times\PP^{a_2}\times\PP^{a_2}\times\cdots\times\PP^{a_n}\times\PP^{a_n}$, then the cohomology vector bundle $E$ associated to the monad 
\[
\begin{CD}
0@>>>{\OO_X(-1,\cdots,-1)^{\oplus k}} @>>^{f}>{\mathscr{G}_1\oplus\cdots\oplus\mathscr{G}_n}@>>^{g}>\OO_X(1,\cdots,1)^{\oplus k} @>>>0
\end{CD}
\]
of rank $2(a_1+a_2+\cdots+a_n)+2k(n-1)$ is simple.
\end{theorem}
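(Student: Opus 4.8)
The plan is to deduce simplicity of $E$ from the stability of the bundle $T=\ker g$ established in Theorem 3.7, using the display diagram of the monad. Recall that $E$ sits in two short exact sequences: $0\to T\to \mathscr{G}_1\oplus\cdots\oplus\mathscr{G}_n\to \OO_X(1,\cdots,1)^{\oplus k}\to 0$ exhibiting $T$, and $0\to \OO_X(-1,\cdots,-1)^{\oplus k}\to T\to E\to 0$ exhibiting $E$ as a quotient of $T$. To prove $\Hom(E,E)=k$, the standard strategy is to reduce $\Hom(E,E)$ to a cohomology group that can be computed from these sequences and shown to be $1$-dimensional. First I would tensor the sequence $0\to \OO_X(-1,\cdots,-1)^{\oplus k}\to T\to E\to 0$ by $E^\ast$ and take the long exact sequence in cohomology; this gives
\[
0\to H^0(E^\ast(-1,\cdots,-1))^{\oplus k}\to H^0(T\otimes E^\ast)\to H^0(E\otimes E^\ast)\to H^1(E^\ast(-1,\cdots,-1))^{\oplus k}.
\]
Since $E$ (being a subquotient of a sum of line bundles of nonpositive degrees) has no global sections of its dual twisted down, one expects $H^0(E^\ast(-1,\cdots,-1))=0$; this has to be checked, but it follows from dualizing the defining sequences of $T$ and $E$ and the vanishing results in Lemmas 2.12--2.14. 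So $H^0(E\otimes E^\ast)\hookrightarrow H^0(T\otimes E^\ast)$ when the $H^1$ term also vanishes, or at least injects after the first map is understood.

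Next I would analyze $H^0(T\otimes E^\ast)$ by instead dualizing the same sequence and tensoring by $T$: from $0\to E^\ast\to T^\ast\to \OO_X(1,\cdots,1)^{\oplus k}\to 0$, tensoring by $T$ yields
\[
0\to T\otimes E^\ast\to T\otimes T^\ast\to T(1,\cdots,1)^{\oplus k}\to 0,
\]
so $H^0(T\otimes E^\ast)\hookrightarrow H^0(T\otimes T^\ast)=\Hom(T,T)$, and the latter is $1$-dimensional because $T$ is stable, hence simple. Chasing back, $H^0(E\otimes E^\ast)$ embeds into a $1$-dimensional space, and since the identity endomorphism of $E$ gives a nonzero class, we get $h^0(E\otimes E^\ast)=1$, i.e. $E$ is simple. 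The one point requiring care is that the composite $H^0(E\otimes E^\ast)\to H^0(T\otimes T^\ast)$ sends $\mathrm{id}_E$ to something nonzero and that the identification is compatible, but this is formal from the display diagram.

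The main obstacle I anticipate is the vanishing statement $H^0(X,E^\ast(-1,\cdots,-1))=0$ (and the analogous control of the connecting maps), since $E^\ast$ is not a sum of line bundles and must be resolved. I would handle this by splicing the dual of $0\to \OO_X(-1,\cdots,-1)^{\oplus k}\to T\to E\to 0$ with the dual of the $T$-sequence to express $E^\ast$ via $\mathscr{G}_i^\ast$, $\OO_X(1,\cdots,1)^{\oplus k}$ and $\OO_X(1,\cdots,1)^{\oplus k}$, all of which are sums of line bundles; the relevant $H^0$ and $H^1$ groups of the twists $E^\ast(-1,\cdots,-1)$, $\mathscr{G}_i^\ast(-1,\cdots,-1)$ then vanish by Künneth (Theorem 2.11), Lemma 2.12 and Lemma 2.15, exactly as in the proof of Theorem 3.7. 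With those vanishings in hand the cohomology chase above closes and gives $\Hom(E,E)=k$.
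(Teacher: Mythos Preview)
Your overall strategy---reduce simplicity of $E$ to simplicity of the stable bundle $T=\ker g$ via the display diagram---is exactly the paper's approach, and your second step (dualize $0\to\OO_X(-1,\ldots,-1)^{\oplus k}\to T\to E\to 0$, tensor by $T$, and read off $H^0(T\otimes E^\ast)\hookrightarrow H^0(T\otimes T^\ast)$) is fine. The problem is your first step.

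When you tensor $0\to\OO_X(-1,\ldots,-1)^{\oplus k}\to T\to E\to 0$ by $E^\ast$, the long exact sequence you wrote has the map going from $H^0(T\otimes E^\ast)$ \emph{to} $H^0(E\otimes E^\ast)$, not the other way around; there is no injection $H^0(E\otimes E^\ast)\hookrightarrow H^0(T\otimes E^\ast)$ coming out of this. To bound $h^0(E\otimes E^\ast)$ from this sequence you would need surjectivity, i.e.\ $H^1(E^\ast(-1,\ldots,-1))=0$. That vanishing is false: twisting the dual sequence $0\to E^\ast\to T^\ast\to\OO_X(1,\ldots,1)^{\oplus k}\to 0$ by $\OO_X(-1,\ldots,-1)$ and using $H^0(T^\ast(-1,\ldots,-1))=H^1(T^\ast(-1,\ldots,-1))=0$ (which you can and the paper does prove) forces $H^1(E^\ast(-1,\ldots,-1))\cong H^0(\OO_X^{\oplus k})\cong k^{\,k}\neq 0$. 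So the chain $h^0(E\otimes E^\ast)\le h^0(T\otimes E^\ast)\le 1$ does not close.

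The fix is to swap the roles in step one: dualize the sequence first and tensor by $E$ rather than tensoring the original sequence by $E^\ast$. From $0\to E^\ast\to T^\ast\to\OO_X(1,\ldots,1)^{\oplus k}\to 0$ tensored with $E$ you get $0\to E\otimes E^\ast\to E\otimes T^\ast\to E(1,\ldots,1)^{\oplus k}\to 0$, hence the honest injection $H^0(E\otimes E^\ast)\hookrightarrow H^0(E\otimes T^\ast)$ with no $H^1$ hypothesis needed. Now the intermediate bundle is $E\otimes T^\ast$, not $T\otimes E^\ast$, so step two must match: tensor the original sequence $0\to\OO_X(-1,\ldots,-1)^{\oplus k}\to T\to E\to 0$ by $T^\ast$ to get $0\to T^\ast(-1,\ldots,-1)^{\oplus k}\to T\otimes T^\ast\to E\otimes T^\ast\to 0$, and the vanishing of $H^0$ and $H^1$ of $T^\ast(-1,\ldots,-1)$ (provable directly from the line-bundle resolution of $T^\ast$ via K\"unneth, exactly as you sketched) gives $h^0(E\otimes T^\ast)=h^0(T\otimes T^\ast)=1$. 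This is precisely what the paper does.
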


\begin{proof}
The display of the monad is
\[
\begin{CD}
@.@.0@.0\\
@.@.@VVV@VVV\\
0@>>>{\OO_{X}(-1,\cdots,-1)^{\oplus k}} @>>>T=\ker g@>>>E@>>>0\\
@.||@.@VVV@VVV\\
0@>>>{\OO_{X}(-1,\cdots,-1)^{\oplus k}} @>>^{f}>{\mathscr{G}_n\oplus\cdots\oplus\mathscr{G}_m}@>>>Q=\cok f@>>>0\\
@.@.@V^{g}VV@VVV\\
@.@.{\OO_{X}(1,\cdots,1)^{\oplus k}}@={\OO_{X}(1,\cdots,1)^{\oplus k} }\\
@.@.@VVV@VVV\\
@.@.0@.0
\end{CD}
\]

\noindent Since $T$ the kernel of the map $g$ is stable from the above theorem 3.6, we prove that the cohomology vector bundle $E=\ker g/\im f$  is simple.\\
\\
The first step is to take the dual of the short exact sequence 
\[\begin{CD}
0@>>>\OO_X(-1,\cdots,-1)^{\oplus k} @>>>T@>>>E @>>>0
\end{CD}\]
to get
\[
\begin{CD}
0@>>>E^* @>>>T^* @>>>\OO_X(1,\cdots,1)^{\oplus k}@>>>0.
\end{CD}
\]
Tensoring by $E$ we get\\
\[
\begin{CD}
0@>>>E\otimes E^* @>>>E\otimes T^* @>>>E(1,\cdots,1)^k@>>>0.
\end{CD}
\]
Now taking cohomology gives:
\[\begin{CD}
0@>>>H^0(X,E\otimes E^*) @>>>H^0(X,E\otimes T^*) @>>>H^0(E(1,\cdots,1)^{\oplus k})@>>>\cdots
\end{CD}\]
\\
which implies that 
\begin{equation}
h^0(X,E\otimes E^*) \leq h^0(X,E\otimes T^*)
\end{equation}
\\
Now we dualize the short exact sequence
\[\begin{CD}
0@>>>T @>>>{\mathscr{G}_1\oplus\cdots\oplus\mathscr{G}_n} @>>>\OO_X(1,\cdots,1)^{\oplus k} @>>>0
\end{CD}\]
\\
to get
\[\begin{CD}
0@>>>\OO_X(-1,\cdots,-1)^{\oplus k} @>>>{\mathscr{G}_1\oplus\cdots\oplus\mathscr{G}_n} @>>>T^* @>>>0
\end{CD}\]
\\
Now twisting the short exact sequence above by $\OO_X(-1,\cdots,-1)$ one obtains the short exact sequence
\[\begin{CD}
0@>>>\OO_X(-2,\cdots,-2)^{\oplus k} @>>>{\mathscr{G'}_1\oplus\cdots\oplus\mathscr{G'}_n} @>>>T^*(-1,\cdots,-1) @>>>0
\end{CD}\]
where 
\begin{align*}
\mathscr{G'}_1:=\OO_X(-2,-1,-1,\cdots,-1)^{\oplus a_1+\oplus k}\oplus\OO_X(-1,-2,-1,-1,\cdots,-1)^{\oplus a_1+\oplus k}\\
\mathscr{G'}_2:=\OO_X(-1,-1,-2,\cdots,-1)^{\oplus a_2+\oplus k}\oplus\OO_X(-1,-1,-1,-2,\cdots,-1)^{\oplus a_2+\oplus k}\\
\cdots\cdots\cdots\cdots\cdots\cdots\cdots\cdots\cdots\cdots\cdots\cdots\cdots\cdots\cdots\cdots\cdots\cdots\cdots\cdots\cdots\cdots\cdots\\
\mathscr{G'}_n:=\OO_X(-1,-1,\cdots,-1,-2,-1)^{\oplus a_n+\oplus k}\oplus\OO_X(-1,-1,\cdots,-1,-2)^{\oplus a_n+\oplus k}
\end{align*}
next on taking cohomology one gets\\
\[\begin{CD}
0\lra H^0(\OO_X(-2,\cdots,-2)^k) \lra H^0(\mathscr{G'}_1)\oplus\cdots\oplus H^0(\mathscr{G'}_n)\lra H^0(T^*(-1,\cdots,-1))\lra\\
\lra H^1(\OO_X(-2,\cdots,-2)^k) \lra H^1(\mathscr{G'}_1)\oplus\cdots\oplus H^1(\mathscr{G'}_n)\lra H^1(T^*(-1,\cdots,-1))\lra\\
\lra H^2(X,\OO_X(-2,\cdots,-2)^k) \lra H^2(\mathscr{G'}_1)\oplus\cdots\oplus H^2(\mathscr{G'}_n)\lra H^2(T^*(-1,\cdots,-1))\lra\cdots
\end{CD}
\]
\\
from which we deduce $H^0(X,T^*(-1,\cdots,-1)) = 0$ and $H^1(X,T^*(-1,\cdots,-1)) = 0$ from Theorems 2.11 and 2.12.\\
\\
Lastly, tensor the short exact sequence
\[
\begin{CD}
0@>>>\OO(-1,\cdots,-1)^{\oplus k} @>>>T @>>> E@>>>0\\
\end{CD}
\]
by $T^*$ to get
\[
\begin{CD}
0@>>>T^*(-1,\cdots,-1)^k @>>>T\otimes T^* @>>> E\otimes T^*@>>>0\\
\end{CD}
\]
and taking cohomology we have
\\
\[
\begin{CD}
0@>>>H^0(X,T^*(-1,\cdots,-1)^k) @>>>H^0(X,T\otimes T^*) @>>> H^0(X,E\otimes T^*)@>>>\\
@>>>H^1(X,T^*(-1,\cdots,-1)^k)@>>>\cdots
\end{CD}
\]
\\
But since  $H^0(X,T^*(-1,\cdots,-1)) = H^1(X,T^*(-1,\cdots,-1)) = 0$ from above then  it follows $H^1(X,T^*(-1,\cdots,-1)^k)=0$ for $k>1$.\\
\\
so we have 
\\
\[
\begin{CD}
0@>>>H^0(X,T^*(-1,\cdots,-1)^{k}) @>>>H^0(X,T\otimes T^*) @>>> H^0(X,E\otimes T^*)@>>>0
\end{CD}
\]
\\
This implies that 
\begin{equation}
h^0(X,T\otimes T^*) \leq h^0(X,E\otimes T^*)
\end{equation}
\\
Since $T$ is stable then it follows that it is simple which implies $h^0(X,T\otimes T^*)=1$.\\
\\
From $(3)$ and $(4)$ and putting these together we have;\\
\[1\leq h^0(X,E\otimes E^*) \leq h^0(X,E\otimes T^*) = h^0(X,T\otimes T^*) = 1\]\\
\\
We have $ h^0(X,E\otimes E^*) = 1 $ and therefore $E$ is simple.

\end{proof}

\section{Acknowledgment}
\noindent I wish to express sincere thanks to the Department of Mathematics, College of Science, Sultan Qaboos University staff for providing 
a conducive enviroment to be able to carry out research despite the overwhelming duties in teaching and community service.
I would also wish to express my sincere thanks to my collegues at the Department of Mathematics at the University of Nairobi for granting 
me leave in order to pursue my research work. Lastly, I am extremely grateful to my wife and our 3 kids who are always supportive of my pursuits.

\newpage


\begin{thebibliography}{1}
%
\bibitem[1]{1} Costa L and Mir\'{o}-Roig R M. Monads and instanton bundles on smooth hyperquadrics. Mathematische Nachrichten, 282 (2009), no 2, 169-179. 
\href{https://onlinelibrary.wiley.com/doi/10.1002/mana.200610730}{\color{blue}doi: 10.1002/mana.2000610730.}
%
\bibitem[2]{2} Fl\o{}ystad G. Monads on a Projective Space. Communications in Algebra, 28 (2000), 5503 - 5516.
\href{https://www.tandfonline.com/doi/abs/10.1080/00927870008827171}{\color{blue}doi: 10.1080/00927870008827171.}
%
\bibitem[3]{3} Hartshorne R. Varieties of small codimension in projective space.  Bull. Amer. Math. Soc. 80 (1974), 1017-1032.
\href{https://doi.org/10.1090/S0002-9904-1974-13612-8}{doi 10.1090/S0002-9904-1974-13612-8}
%
\bibitem[4]{4} Hartshorne R. Algebraic vector bundles on projective spaces: A problem list, Topology, 1979, vol 18, 117-128
\href{https://www.sciencedirect.com/science/article/pii/0040938379900302?via%3Dihub}{\color{blue}doi.org/10.1016/0040-9383(79)90030-2}
%

\bibitem[5]{5} Hoppe H. Generischer Spaltungstyp und zweite Chernklasse stabiler Vektorraumbündel. vom Rang 4 auf $\PP^4$ , Math. Z. 187 (1984), 345–360.
\href{http://gdz.sub.uni-goettingen.de/dms/resolveppn/?PPN=GDZPPN002429659}{\color{blue}eudml.org/doc/173496.}
%

\bibitem[6]{6} Horrocks G. Examples of rank three vector bundles on five-dimensional projective space,
Journal of London Mathematical Society. (2), 18 (1978), 15-27.
 \href{https://londmathsoc.onlinelibrary.wiley.com/doi/epdf/10.1112/jlms/s2-18.1.15}{\color{blue}doi:10.1112/jlms/s2-18.1.15.}

%

\bibitem[7]{7} Horrocks G. Vector bundles on the punctured spectrum of a local ring. Proc. London Math.
Soc. 14, 689-713 (1964). \href{https://londmathsoc.onlinelibrary.wiley.com/doi/abs/10.1112/plms/s3-14.4.689}{\color{blue}doi.org/10.1112/plms/s3-14.4.689.}

%
\bibitem[8]{8} Horrocks G and Mumford D. A rank 2 vector bundle on $\PP^{4}$ with 15,000 symmetries, Topology, 12 (1973), 63-81. 
\href{https://doi.org/10.1016/0040-9383(73)90022-0}{\color{blue}doi:10.1016/0040-9383(73) 90022-0.}
%

\bibitem[9]{9} Jardim M, Menet M, Prata D and Earp HNS\'{a}. Holomorphic bundles for higher dimensional gauge theory.
Bulletin London Mathematical society, 49 (2017). \href{https://doi.org/10.1112/blms.12017}{\color{blue}doi: 10.1112/blms.12017.}
%

\bibitem[10]{10} Jardim M and Earp HNS\'{a}. Monad construction of asymptotically stable Bundles. arXiv (September,2011).
\href{https://arxiv.org/pdf/1109.2750v1.pdf}{\color{blue}arxiv.org/pdf/1109.2750v1.pdf}
%
\bibitem[11]{11} Kumar, N. Construction of rank two vector bundles on $\PP^4$ in positive characteristic. 
Invent math 130, 277–286 (1997). \href{https://doi.org/10.1007/s002220050185}{doi 10.1007/s002220050185}


\bibitem[12]{12} Kumar N, Peterson C and Rao A P. Construction of low rank vector bundles on $\PP^4$ and $\PP^5$, Journal of Algebraic Geometry 11 (2) (2002), 203–217.
\href{https://doi.org/10.1090/S1056-3911-01-00309-5}{doi 10.1090/S1056-3911-01-00309-5}

%
\bibitem[13]{13} Marchesi S, Marques P M and Soares H. Monads on a Projective Varieties. Pacific Journal of Mathematics, vol 296 (2018), no. 1, 155-180. 
\href{https://doi.org/10.2140/pjm.2018.296.155}{\color{blue}doi: 10.2140/pjm.2018.296.155.}
%

\bibitem[14]{14} Maingi D. Vector Bundles of low rank on a multiprojective space. Le Matematiche. Vol. LXIX (2014) - Fasc. II. pp 31-41.
\href{https://lematematiche.dmi.unict.it/index.php/lematematiche/article/view/1048}{\color{blue}doi: 10.4418/2014.69.2.4.}
%

\bibitem[15]{15} Maingi D. Vector Bundles associated to Monads on Cartesian Products of Projective Spaces.
arXiv, 2022, \href{https://doi.org/10.48550/arXiv.2202.07876}{doi: arXiv.2202.07876}

%
\bibitem[16]{16} Okonek C, Schneider M and Spindler H. Vector Bundles on Complex Projective Spaces.
Springer, 1980, \href{https://doi.org/10.1007/978-1-4757-1460-9}{\color{blue}doi.org/10.1007/978-1-4757-1460-9}
%
\bibitem[17]{17} Perrin D. G\'{e}om\'{e}trie alg\'{e}brique. Une introduction, (1995), EDP Sciences/CNRS \'{e}dition.
\href{https://www.decitre.fr/livres/geometrie-algebrique-9782729605636.html}{\color{blue}ISBN-2-7296-0563-0}
%

\bibitem[18]{18} Tango, H. An example of indecomposable vector bundle of rank $n-1$ on $\PP^n$, $n\geq3$. 
Journal of Mathematics of Kyoto University, 16, (1976): 137-141,
\href{http://doi.org/10.1215/kjm/1250522965}{\color{blue}doi: 10.1215/kjm/1250522965.}

%
\bibitem[19]{19} Tango H. On morphisms from projective space $\PP^{n}$ to the Grassmann variety $\GG(n,d)$,
Journal of Mathematics of Kyoto University, 16 (1976), 201-207. 
\href{http://doi.org/10.1215/kjm/1250522969}{\color{blue}doi: 10.1215/kjm/1250522969.}



\end{thebibliography}
\end{document}